\newcommand{\res}{\textup{res}}
\newcommand{\ita}[1]{\textit{#1}}
\newcommand*\diff{\mathop{}\!\textup{d}}
\newcommand{\col}{\colon \thinspace}
\newcommand{\Vol}{\textup{Vol}}
\newcommand{\Hom}{\textup{Hom}}
\newcommand{\End}{\textup{End}}
\newcommand{\Ad}{\textup{Ad}}
\newcommand{\hol}[1]{{C^{{#1},\alpha}}}
\newcommand{\VertC}[1]{\Vert_{C^{{#1}}}}
\newcommand{\VertH}[1]{\Vert_{C^{{#1},\alpha}}}
\newcommand{\im}{\textup{Im}\thinspace }
\newcommand{\cmark}{\ding{51}}
\newcommand{\xmark}{\ding{55}}
\newcommand{\G}{\textup{G}}
\newcommand{\SO}{\textup{SO}}
\newcommand{\GL}{\textup{GL}}
\newcommand{\Sp}{\textup{Sp}}
\newcommand{\U}{\textup{U}}
\renewcommand{\Im}{\textup{Im}\thinspace}
\renewcommand{\Re}{\textup{Re}\thinspace}
\renewcommand{\H}{\textup{H}}
\DeclarePairedDelimiter{\Abs}{\|}{\|}
\title{Coassociative submanifolds in Joyce's generalised Kummer constructions}
\author{Dominik Gutwein}
\numberwithin{equation}{section}
\newcommand{\mynewtheorem}[2]{
  \newaliascnt{#1}{dummycounter}
  \newtheorem{#1}[#1]{#2}
  \aliascntresetthe{#1}
  \expandafter\def\csname #1autorefname\endcsname{#2}
}
\theoremstyle{plain}
\theoremstyle{definition}
\theoremstyle{remark}
\begin{document}
\maketitle

\begin{abstract}
This article constructs coassociative submanifolds in $\G_2$-manifolds arising from Joyce's generalised Kummer construction. The novelty compared to previous constructions is that these submanifolds all lie within the critical region of the $\G_2$-manifold in which the metric degenerates. This forces the volume of the coassociatives to shrink to zero when the orbifold-limit is approached.
\end{abstract}

\noindent \textbf{Changes to the published version:} This article is a revision of \cite{Gutwein-coassociatives_in_Kummer} published in Pure and Applied Mathematics Quarterly Volume 20 (2024) Number 2 . The current version corrects some typographical errors and mathematical inaccuracies and provides additional explanations and clarifications. Moreover, it rectifies two minor mistakes in \cite[Example~3.2 and Example~4.9]{Gutwein-coassociatives_in_Kummer} (see \autoref{ex:coassociative_model} and \autoref{Ex:Reidegeld_Dic3} below).

\section{Introduction}\label{sec:intro}

Associative and coassociative submanifolds are the natural subobjects in $7$-dimensional $\G_2$-manifolds. Besides having minimal volume among all submanifolds realising a fixed homology class (and therefore being minimal, cf.~\cite[Sections~2.4 and~4.1.A-B]{HarveyLawson}), they play a prominent role in the extensively studied gauge theory on $\G_2$-manifolds (see for 
example~\cite{Tian-Gauge+Calibrations} and~\cite{DonaldsonSegal-Gauge}). Moreover, Halverson and Morrison proposed that associative and coassociative submanifolds might play a role in characterising the period domain of a $\G_2$-manifold~\cite[Section~3]{HalMor-GaugeEnhancement} (see also the formulation in~\cite[Introduction]{DPSDTW-Associatives}). More precisely, assume that $Y$ is a simply-connected and  compact $7$-manifold that admits torsion-free $\G_2$-structures. In analogy to the Kähler cone of a Calabi--Yau 3-fold, Halverson and Morrison~\cite[Section~3]{HalMor-GaugeEnhancement} proposed that (the $\G_2$-period domain)
\begin{align*}
\mathcal{Q}(Y) \coloneqq \big\{ ([\phi],[*_\phi \phi]) \in \H^3(Y) \oplus \H^4(Y) \mid &\textup{ $\phi \in \Omega^3(Y)$ is a torsion-free $\G_2$-structure}\big\} 
\end{align*}
might be fully characterised by the following inequalities:\footnote{Where we ignore for the moment the issue that the notions of $\G_2$-instantons, associative-, and coassociative submanifolds themselves depend on $\phi$. Furthermore, note that the integration is carried out with respect to the orientation determined by $\frac{1}{7}[\phi] \cup [*_{\phi}\phi]\in \H^7(Y)$}
\begin{enumerate}
\item A topological condition: $\int_Y \alpha\wedge\alpha \wedge \phi < 0$ for every nonzero $[\alpha] \in \H^2(Y)$. 
\item A characteristic class condition: $\int_Y p_1(E) \wedge \phi < 0$ for any vector bundle $E$ over $Y$ admitting a non-flat $\G_2$-instanton.
\item~\label{bul:CalCon} Two calibrated submanifold conditions: 
\begin{itemize}
\item $\int_P \phi > 0$ for any associative submanifold $P$. 
\item $\int_M *\phi>0$ for any coassociative submanifold $M$.
\end{itemize}
\end{enumerate}

If  Halverson and Morrison's proposal is indeed true, then certain degenerations of $\G_2$-structures would be detectable by the vanishing of one of the above integrals. As a step towards this proposal, Dwivedi, Platt, and Walpuski constructed therefore 
in~\cite{DPSDTW-Associatives} associative submanifolds in families of $\G_2$-manifolds arising from Joyce's generalised Kummer 
construction~\cite{Joyce-GeneralisedKummer1,Joyce-GeneralisedKummer2}. These associative submanifolds have the property that their volume shrinks to zero as the $\G_2$-manifold approaches its (singular) orbifold-limit. (This is equivalent to $\int_P \phi_t \to 0$ where $P$ denotes the mentioned associative and $\phi_t$ corresponds to the degenerating path of $\G_2$-structures.) The purpose of the article at hand is to augment their work by the analogous construction of coassociative submanifolds. We hereby proceed as follows:

In \autoref{Sec:background} we review the necessary background on the generalised Kummer construction and asymptotically locally Euclidean (ALE) hyperkähler $4$-manifolds. \autoref{sec:Perturbing_coassociatives} is devoted to the analysis of our construction. In \autoref{theo:perturbing_Coassociatives} we prove a perturbation result for coassociative submanifolds whose spirit is well-known from gluing constructions in gauge theory. It roughly states that whenever two closed $\G_2$-structures $\phi$ and $\phi_0$ on a $7$-manifold $Y$ are 'close' (in a quantified sense) to one another, then a $\phi_0$-coassociative submanifold $M$ can be perturbed to a $\phi$-coassociative, provided that $[\phi_{\vert M}] = 0 \in H^3(M)$. In \autoref{prop:coassociatives_examples} we prove that this theorem is applicable to a certain class of submanifolds that occur very frequently in generalised Kummer constructions. These submanifolds are modelled on (or covered by) the product of a $2$-torus and a holomorphic sphere where the latter lies in the exceptional divisor of the glued-in ALE hyperkähler $4$-manifold appearing in the Kummer construction (cf. \autoref{ex:coassociative_model}). Subsequently, we find in \autoref{Sec:Examples} numerous examples of coassociative submanifolds in various resolutions of $\G_2$-orbifolds constructed 
in~\cite{Joyce-GeneralisedKummer2} and~\cite{Reidegeld}.  Our construction leaves the freedom of choosing a 'basepoint' of the 2-torus and we mention in \autoref{rem:coassfam} that moving this basepoint produces the full deformation family of the coassociatives constructed in \autoref{prop:coassociatives_examples}. In our examples, this family is either homeomorphic to $S^1$ or a closed interval. The coassocative submanifolds in the latter case are embedded for the inner values of the interval and factor at the endpoints through a double-cover over an (embedded) rigid coassociative submanifold. Ultimately, we give in \autoref{sec:ReidegeldResolution} all choices of ALE hyperkähler $4$-manifold that can be used to resolve the $\G_2$-orbifolds 
of~\cite{Reidegeld} that were treated in \autoref{Sec:Examples}.

There already exists a vast literature on the construction of coassociative submanifolds (see~\cite[Chapter~12]{Joyce-Blue} and~\cite[Section~6]{Lotay-Calibrated} for an overview). Here we only mention that Joyce~\cite[Section~4.2]{Joyce-GeneralisedKummer2} constructed coassociative submanifolds inside his generalised Kummer constructions as fixed-point sets of anti $\G_2$-involutions. At least one part of their support lies outside the critical region of the ambient manifold in which the orbifold singularities develop. In contrast, the coassociatives in the article at hand are all constructed to lie completely within this region. This is ultimately the reason why their volume shrinks to zero.

\subsection*{Acknowledgements}

I am grateful to my PhD-supervisor Thomas Walpuski for suggesting this problem to me. Many ideas in this article arose from countless meetings we had which are invaluable to me. Furthermore, I am indebted to Gorapada Bera and Viktor Majewski for their constructive tips, discussions, and proofreading. Ultimately, I would like to thank Dominic Joyce and Daniel Platt for answering my questions 
on~\cite{Joyce-CalabiYau} and~\cite{Platt-Estimates}, respectively, and an anonymous referee for helpful suggestions. This material is based upon work supported by the Simons Collaboration “Special Holonomy in Geometry, Analysis, and Physics”.

\section{Background}\label{Sec:background}

\subsection{Joyce's generalised Kummer construction}\label{Sec:generalisedKummer}

The generalised Kummer construction, as developed (and extended) by Joyce 
in~\cite{Joyce-GeneralisedKummer1,Joyce-GeneralisedKummer2,Joyce-Black}, produces compact manifolds with holonomy contained in $\G_2$ as desingularisations of certain $\G_2$-orbifolds. This section follows the presentation 
in~\cite{DPSDTW-Associatives} very closely. The following class of examples serve as models for the singularities considered in this article:
\begin{example}\label{ex_G2Model}
Let $(X,\underline{\omega})$ be a hyperkähler 4-orbifold with hyperkähler structure $\underline{\omega} \in \Omega^2(X,\im \mathbb{H}^*)$. Denote by $\Vol \in \Omega^3(\im \mathbb{H})$ and $\underline{\sigma} \in \Omega^1(\im \mathbb{H},\im \mathbb{H})$ the volume form and the canonical isomorphism $T\im \mathbb{H} \to \im \mathbb{H} \times \im \mathbb{H}$, respectively. In the following we denote by $\langle \underline{\sigma} \wedge \underline{\omega} \rangle$ the $3$-form on $\im \mathbb{H} \times X$ obtained by wedging and pairing $\im \mathbb{H} \otimes \im \mathbb{H}^* \to \mathbb{R}$.
\begin{enumerate}
\item The product $\im \mathbb{H} \times X$ carries a torsion-free $\G_2$-structure defined by
\begin{equation}\label{eq:modelG2Structure}
\phi \coloneqq \Vol - \langle \underline{\sigma} \wedge \underline{\omega} \rangle \in \Omega^3(\im \mathbb{H} \times X).
\end{equation}
\item Assume there is a group action $\rho \col G \to \textup{Isom}(X)$ by $G< \SO(\im\mathbb{H}) \ltimes  \im \mathbb{H}$ that preserves the hyperkähler structure in the sense that for any $(R,v)\in G$ 
\begin{equation}\label{eq:HyperkählerEquivariance}
(\rho(R,v)^*\otimes R^*) \underline{\omega} = \underline{\omega}. 
\end{equation}
The $3$-form $\phi$ is invariant under the product action on $\im \mathbb{H} \times X$ and descends to a torsion-free $\G_2$-structure on the quotient $Y\coloneqq (\im \mathbb{H} \times X)/G$. We denote the corresponding $3$-form on $Y$ by $\phi$ as well. Note that whenever $G$ is Bieberbach (i.e. discrete, cocompact, and torsion-free) then the action is free and taking the quotient does not introduce additional singularities in $Y$.
\end{enumerate}  
\end{example}

Let now $(Y_0,\phi_0)$ be a compact and flat $\G_2$-orbifold such that its  singularities are locally modelled on $\mathbb{R}^3 \times \mathbb{H}/\Gamma$ for a finite group $\Gamma < \Sp(1)$. More precisely, we demand:
\begin{assumption}\label{Ass:Codim4Singularities}
Denote by $\mathcal{S}$ the set of connected components of the singular set of $Y_0$. We assume that for every $S \in \mathcal{S}$ there exist
\begin{enumerate}
\item A finite subgroup $\Gamma_S < \Sp(1)$, a Bieberbach group $G_S< \SO(\im \mathbb{H}) \ltimes \im \mathbb{H}$, and a group action $\rho \col G_S \to N_{\SO(\mathbb{H})}(\Gamma_S)/\Gamma_S\subset \textup{Isom}(\mathbb{H}/\Gamma_S)$. Denote by \[(Y_S \coloneqq (\im \mathbb{H} \times \mathbb{H}/\Gamma_S)/G_S,\phi_S)\] the corresponding $\G_2$-orbifold from \autoref{ex_G2Model}. 
\item An open set \[ U_S \coloneqq (\im \mathbb{H} \times B_{2R_S}(0)/\Gamma_S)/G_S \subset Y_S \] for $R_S>0$ and an open embedding $\mathtt{J}_S \col U_S \to Y_0$ with $S \subset \mathtt{J}_S(U_S)$ and $\mathtt{J}_S^*\phi_0 = \phi_S$. The $R_S$ are chosen such that $\mathtt{J}_{S_1}(U_{S_1})\cap \mathtt{J}_{S_2}(U_{S_2}) = \emptyset $ for any two $S_1 \neq S_2 \in \mathcal{S}$.
\end{enumerate}
\end{assumption}

\begin{remark}\label{rem:ADE}
All (non-trivial) finite subgroups $\Gamma < \Sp(1)$ were classified by 
Klein~\cite{Klein-ADE}. These are isomorphic to:
\begin{itemize}
\item[$(A_k)$] The cyclic group $C_{k+1}$ for $k\geq1$
\item[$ (D_k)$] The dicyclic group $\textup{Dic}_{k-2}$ for $k\geq 3$
\item[$(E_6)$] The binary tetrahedral group $2T$
\item[$(E_7)$] The binary octahedral group $2O$
\item[$(E_8)$] The binary icosahedral group $2I$
\end{itemize}
(See also~\cite[Section~2]{Reidegeld} for a description on how these groups lie inside $\Sp(1)$.)
\end{remark}

\begin{definition}\label{def:RData}
Let $(Y_0,\phi_0)$ be a flat $\G_2$-orbifold satisfying \autoref{Ass:Codim4Singularities}. A set of resolution data consists for every $S \in \mathcal{S}$ of the following:
\begin{enumerate}
\item \label{def:ALE-space}  An asymptotically locally Euclidean (ALE) hyperkähler manifold asymptotic to $\mathbb{H}/\Gamma_S$. That is, a hyperkähler $4$-manifold $(\hat{X}_S,\underline{\hat{\omega}}_S)$ together with a diffeomorphism $\tau_S \col \hat{X}_S\setminus \hat{K}_S \to (\mathbb{H}\setminus B_{R_S}(0))/\Gamma_S$ outside a compact set $\hat{K}_S \subset \hat{X}_S$ that satisfies \[ \vert \nabla^k \big({\tau_S}_*\underline{\hat{\omega}}_S - \underline{\omega}\big) \vert = \mathcal{O}(r^{-4-k}).\] The norm and covariant derivatives are hereby taken with respect to the flat metric on $(\mathbb{H}\setminus\{0\})/\Gamma_S$.
\item A group action $\rho_S \col G_S \to \textup{Isom}(\hat{X}_S)$ which leaves $\hat{K}_S$ and $\underline{\hat{\omega}}_S$ invariant (in the sense of \eqref{eq:HyperkählerEquivariance}) and makes $\tau_S$ equivariant.
\end{enumerate}
\end{definition}

For a given orbifold $Y_0$, a set of resolution data, and a positive parameter $t>0$ we define the following sets: 
\begin{align*}
V\coloneqq \bigsqcup_{S\in \mathcal{S}} V_S \quad &\textup{for} \quad V_S\coloneqq (\im \mathbb{H} \times B_{R_S}(0)/\Gamma_S)/G_S \subset (\im \mathbb{H} \times \mathbb{H}/\Gamma_S)/G_S \\
U\coloneqq \bigsqcup_{S\in \mathcal{S}} U_S \quad &\textup{for} \quad U_S \coloneqq (\im \mathbb{H} \times B_{2R_S}(0)/\Gamma_S)/G_S \subset (\im \mathbb{H} \times \mathbb{H}/\Gamma_S)/G_S \\
\hat{V} \coloneqq \bigsqcup_{S\in \mathcal{S}} \hat{V}_S \quad &\textup{for} \quad \hat{V}_S \coloneqq (\im \mathbb{H} \times \hat{K}_S)/G_S \subset (\im \mathbb{H} \times \hat{X}_S)/G_S \\
\hat{U}_t \coloneqq \bigsqcup_{S\in \mathcal{S}} \hat{U}^t_S \quad &\textup{for} \quad \hat{U}_S^t \coloneqq( \im \mathbb{H} \times (t \tau_S)^{-1} (B_{2R_S}(0)/\Gamma_S))/G_S \subset (\im \mathbb{H} \times \hat{X}_S)/G_S
\end{align*}

Denote by $\mathtt{J} \col U \to Y_0$ and $t \tau \col \hat{U}_t \to U$ the maps induced by all $\{\mathtt{J}_S\}_{S\in \mathcal{S}}$ and $\{t\tau_S\}_{S \in \mathcal{S}}$, respectively. 

\begin{definition}[{\cite[proof of Theorem~2.2.1]{Joyce-GeneralisedKummer2}}]\label{def:OrbifoldResolution}
Given a flat $\G_2$-orbifold $(Y_0,\phi_0)$ and a set of resolution data, Joyce defines a 1-parameter family of smooth manifolds by \[\hat{Y}_t \coloneqq \big(Y_0 \setminus \mathtt{J}(V)\big) \cup \big(\hat{U}_t \cup \hat{V}\big)/\sim \] where $\hat{U}_t \ni x \sim \mathtt{J}(t\tau(x)) \in \mathtt{J}(U\setminus V)$.

Furthermore, Joyce equips each $\hat{Y}_t$ with a closed $\G_2$-structure $\tilde{\phi}_t$ that has the following property: On any $\hat{V}_S \subset \hat{Y}_t$, $\tilde{\phi}_t$ agrees with the Model Structure~\eqref{eq:modelG2Structure} associated to the (rescaled) hyperkähler structure $t^{2} \underline{\hat{\omega}}_S$ on $\hat{K}_S$.
\end{definition}

\begin{remark}
Instead of working with $\tilde{\phi}_t$ we 
follow~\cite{DPSDTW-Associatives} and work with the rescaled $\G_2$-structure $t^{-3} \tilde{\phi}_t$.
\end{remark}

The following existence theorem was first proven by Joyce 
in~\cite{Joyce-GeneralisedKummer1} and later reproven with improved estimates by Platt 
in~\cite{Platt-Estimates}. The following formulation is taken from~\cite[Theorem~2.19]{DPSDTW-Associatives}.
\begin{theorem}[{\cite[Theorem~4.58]{Platt-Estimates}}]\label{theo:Platt}
Let $(Y_0,\phi_0)$ be a compact and flat $\G_2$-orbifold satisfying \autoref{Ass:Codim4Singularities} and let $\mathcal{R}$ be a set of resolution data. Furthermore, let $\alpha \in (0,1/32)$ be a chosen Hölder exponent. Then there are $T_0 = T_0(\mathcal{R})$ and $c=c(\mathcal{R},\alpha)>0$ such that for any $t\in (0,T_0)$ there exists a torsion-free $\G_2$-structure $\phi_t$ on $\hat{Y}_t$ with $[\phi_t]=[\tilde{\phi}_t] \in \H^3(\hat{Y}_t)$ and \[ \big\Vert t^{-3} (\phi_t - \tilde{\phi}_t) \big\Vert_\hol{1} < c t^{5/2}. \] The $C^{1,\alpha}$-norm above is taken with respect to the metric $t^{-2}\tilde{g}_t$ (induced by $t^{-3} \tilde{\phi}_t$).
\end{theorem}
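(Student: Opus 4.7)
The plan is to produce $\phi_t$ as a small perturbation of the closed approximate $\G_2$-structure $\tilde{\phi}_t$. Since $d\tilde{\phi}_t=0$, I would look for the torsion-free structure in the form $\phi_t = \tilde{\phi}_t + d\eta_t$ for some $\eta_t \in \Omega^2(\hat{Y}_t)$. The remaining torsion-free condition $d *_{\phi_t}\phi_t = 0$ then becomes a nonlinear elliptic equation on $\eta_t$; after linearising at $\tilde{\phi}_t$ it is essentially a Hodge-Laplace equation on $3$-forms, with a right-hand side controlled by the torsion $d*_{\tilde{\phi}_t}\tilde{\phi}_t$ of the approximate structure and a quadratic nonlinearity $Q_t(d\eta_t)$ that is cubic-type in $d\eta_t$ and its first derivative.

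The first input I would establish is a torsion estimate of the form $\Vert t^{-3}(d*_{\tilde{\phi}_t}\tilde{\phi}_t)\Vert_{C^{0,\alpha}} \leq c\,t^{7/2}$ with respect to the rescaled metric $t^{-2}\tilde{g}_t$. This is a local computation: on the ALE pieces $\hat{V}_S$ the structure $\tilde{\phi}_t$ is exactly the model $\G_2$-structure \eqref{eq:modelG2Structure} associated with $t^{2}\underline{\hat{\omega}}_S$, so the torsion vanishes there; on the flat pieces coming from $Y_0\setminus \mathtt{J}(V)$ the torsion vanishes identically; the torsion is therefore supported in the annular transition region $\hat{U}_t\setminus \hat{V}$, where one compares $t\tau_S^*\underline{\hat{\omega}}_S$ with the flat hyperk\"ahler structure $\underline{\omega}$ on $\im\mathbb{H}\times \mathbb{H}/\Gamma_S$. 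The decay condition $|\nabla^k({\tau_S}_*\underline{\hat{\omega}}_S - \underline{\omega})| = \mathcal{O}(r^{-4-k})$ from \autoref{def:RData} together with a careful cut-off argument yields the desired power of $t$.

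The heart of the argument is the uniform analytic input. I would establish, with constants independent of $t\in(0,T_0)$, an estimate of the form $\Vert \eta\Vert_{C^{2,\alpha}} \leq C\,\Vert \Delta \eta\Vert_{C^{0,\alpha}}$ (modulo harmonic forms) for the Hodge Laplacian of the rescaled metric $t^{-2}\tilde{g}_t$. This is the step that is genuinely hard and is the main contribution of \cite{Platt-Estimates}: the geometry of $(\hat{Y}_t, t^{-2}\tilde{g}_t)$ is non-collapsing but non-compact in the limit (it converges to the gluing of $Y_0$ and the rescaled ALE pieces), so one needs weighted Schauder estimates adapted to the different geometric regions, matched across the transition annulus. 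The obstruction to a naive argument is that standard elliptic constants on $(\hat{Y}_t,\tilde{g}_t)$ blow up as $t\to 0$; using the rescaled metric $t^{-2}\tilde{g}_t$ and weighted H\"older norms, one recovers uniformity by separately treating the orbifold region and the ALE ends and patching via Joyce's/Platt's weighted analysis.

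With torsion estimate and uniform Schauder estimate in hand, the remaining step is routine: rewrite the nonlinear equation as a fixed-point problem $\eta_t = F(\eta_t)$, where $F$ depends on the Green's operator of $\Delta$, on the initial torsion, and on $Q_t$. One checks that $F$ preserves a ball of radius $\sim t^{5/2}$ in the rescaled $C^{2,\alpha}$-norm and that it is a contraction there, the latter using that the quadratic nonlinearity $Q_t$ satisfies $\Vert Q_t(a) - Q_t(b)\Vert \leq C(\Vert a\Vert + \Vert b\Vert)\cdot\Vert a - b\Vert$ in the same norms as long as $\Vert a\Vert,\Vert b\Vert$ are small enough (which ensures in particular that $\tilde{\phi}_t + d\eta_t$ remains a genuine positive $3$-form, hence a $\G_2$-structure). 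Banach's fixed-point theorem then produces $\eta_t$ with $\Vert d\eta_t\Vert_{C^{1,\alpha}} < c\,t^{5/2}$ in the $t^{-2}\tilde{g}_t$-norm, and $\phi_t := \tilde{\phi}_t + d\eta_t$ is the desired torsion-free $\G_2$-structure satisfying the claimed estimate.
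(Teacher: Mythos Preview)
The paper does not give its own proof of this theorem: it is stated as a quotation of \cite[Theorem~4.58]{Platt-Estimates} (originally due to Joyce, with improved estimates by Platt), and is used as a black box in the rest of the article. There is therefore nothing in the paper to compare your proposal against.

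That said, your outline is a faithful sketch of the Joyce--Platt strategy that the cited references carry out: write $\phi_t = \tilde{\phi}_t + d\eta_t$, estimate the torsion of $\tilde{\phi}_t$ using the ALE decay in the transition annulus, establish uniform-in-$t$ elliptic estimates for the linearised operator in suitably weighted H\"older norms on $(\hat{Y}_t, t^{-2}\tilde{g}_t)$, and close up via a contraction mapping. One small correction: in Joyce's set-up the linearised operator is not literally the Hodge Laplacian on $3$-forms but a perturbation of $dd^*$ acting on $3$-forms (after a suitable gauge-fixing with $\eta_t = d^*\chi_t$), and the hard uniform estimate in \cite{Platt-Estimates} is for this operator rather than the raw Laplacian; the distinction matters for the precise power $t^{5/2}$ you quote.
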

\begin{remark}
Note that the formulation of \autoref{theo:Platt} in~\cite[Theorem~4.58]{Platt-Estimates} bounds the $C^{1,\alpha}$-norm of $\phi_t-\tilde{\phi}_t$ only by $t^{3/2-\alpha}$. However, if one uses the $C^{1,\alpha}$-norm with respect to $t^{-2}\tilde{g}_t$  (instead of $\tilde{g}_t$) one obtains the estimate in \autoref{theo:Platt} as a direct consequence of the estimate with respect to the weighted norm given in~\cite[Theorem~4.58]{Platt-Estimates}.
\end{remark}
\begin{remark}
The formulation of \autoref{theo:Platt} 
in~\cite{Platt-Estimates} only considers $\G_2$-orbifolds whose singularities are resolved via Eguchi--Hanson spaces. Its proof relies on the property that the set
\[ \big\{ \omega\in \Omega^2(X_{\textup{EH}}) \mid \Delta \omega = 0 \textup{ and } \vert \nabla^\ell \omega \vert = \mathcal{O}(r^{\beta-\ell}) \textup{ for each } \ell \in \mathbb{N}_0 \big\} \]
is independent of $\beta \in [-4,0)$. It was explained to us by Thomas Walpuski that this property holds for every ALE 4-manifold. One way to prove this is as~\cite[Proposition~5.10]{Walpuski-InstantonsKummer} using the improved Kato inequality for harmonic 2-forms in~\cite[Theorem~1]{Seaman-Harmonic_two_forms_4D}. The proof of \autoref{theo:Platt} given 
in~\cite{Platt-Estimates} adapts therefore to resolutions of $\G_2$-orbifolds by arbitrary ALE hyperkähler $4$-manifolds.
\end{remark}

\subsection{Asymptotically locally Euclidean hyperkähler 4-manifolds}\label{sec:ALE}

Recall from \autoref{def:RData} that a resolution of a flat $\G_2$-orbifold requires the choice of an ALE hyperkähler 4-manifold together with a lift of the action of the Bieberbach group. In the following section we review how these can be constructed. All these spaces contain holomorphic spheres which are the main ingredient in our construction of coassociative submanifolds later in this article. 

Note that \autoref{Sec:KronheimerALE} is only relevant for \autoref{Ex:Reidegeld_Dic3} and may be skipped at the reader's preference.
\subsubsection{The Gibbons--Hawking Ansatz}\label{sub:GibbonsHawking}

For $N\in \mathbb{N}$ let $C_N<\Sp(1)$ be the cyclic subgroup generated by right-multiplication with $e^{2\pi i /N}$. Concrete models of ALE spaces asymptotic to $\mathbb{H}/C_N$ were first constructed for $N=2$ by Eguchi and 
Hanson~\cite{EguchiHanson} and then by Gibbons and 
Hawking~\cite{GibbonsHawkings} for general $N$. A detailed treatment of the following material can be found in~\cite[Section~59]{TW-RiemGeo} (see also~\cite[Remark~2.12]{DPSDTW-Associatives} and~\cite[Section~3.5]{Gibbons-Hyperkaehler}): 

\begin{enumerate}
\item For any \[ \zeta \in \Delta \coloneqq \{ [\zeta_1,\dots,\zeta_N] \in (\im \mathbb{H})^N/S_N \mid \textup{$\zeta_1 + \dots + \zeta_N = 0$} \} \] define $Z_\zeta \coloneqq \{\zeta_1,\dots,\zeta_N\} \subset \im \mathbb{H}$, $B_\zeta \coloneqq \im \mathbb{H} \setminus Z_\zeta$, and $f_\zeta \in C^\infty(B_\zeta)$ by \[f_\zeta (q) \coloneqq \sum_{a=1}^N \frac{1}{2 \vert q-\zeta_a\vert }.\] 

The function $f_\zeta$ is a sum of harmonic functions and one can furthermore check that the cohomology class $ [*_3\diff f_\zeta] $ lies inside the image of the canonical inclusion $ H^2(B_\zeta,2 \pi \mathbb{Z}) \hookrightarrow H^2(B_\zeta,\mathbb{R})$. This implies that there exists a (up to isomorphism) unique principal $\U(1)$-bundle $\pi_\zeta \col X^\circ_\zeta \to B_\zeta$ together with a connection $1$-form $i \theta \in \Omega^1(X_\zeta^\circ,i \mathbb{R})$ that satisfies $\diff \theta = \pi_\zeta^*(*_3 \diff f_\zeta)$. 

For $\zeta_i \in Z_\zeta$ denote by $N_{\zeta_i}$ the number of entries of $\zeta$ equal to $\zeta_i$. Around any sphere $S^2 \subset B_\zeta$ whose inner ball only contains $\zeta_i \in Z_\zeta$, the restriction $(X_\zeta^\circ)_{\vert S^2}$ is isomorphic to the quotient of the Hopf-fibration by $C_{N_{\zeta_i}}$.

\item The Gibbons--Hawking Ansatz defines a hyperkähler structure on the total space $X^\circ_\zeta$ as follows: The connection induces a horizontal distribution $X^\circ_\zeta \times \im \mathbb{H} \subset TX_\zeta^\circ$. Furthermore, we identify the vertical tangent bundle $X^\circ_\zeta \times i \mathbb{R}$ with $X^\circ_\zeta \times \Re \mathbb{H}$ via $(x,it) \mapsto (x,t/f_\zeta(x))$. This induces a canonical hypercomplex structure $\underline{I}_\zeta$ on $TX_\zeta^\circ \cong X_\zeta^\circ \times \mathbb{H}$ which is compatible with the metric defined by \[g_\zeta^\circ \coloneqq f^{-1}_\zeta \cdot \theta \otimes \theta + f_\zeta \cdot \pi_\zeta^* (g_{\im \mathbb{H}}).\] The corresponding hyperhermitian form $\underline{\omega}_\zeta$ is closed and therefore hyperkähler.

\item It turns out that $(X_\zeta^\circ,\underline{\omega}_\zeta)$ can be extended to a complete hyperkähler orbifold $(X_\zeta,\underline{\omega}_\zeta)$ by adding $\# Z_\zeta$ points, one over each element in $Z_\zeta$. In fact, whenever \[ \zeta \in \Delta^\circ \coloneqq \{ [\zeta_1,\dots,\zeta_N] \in \Delta \mid \textup{$\zeta_i\neq \zeta_j$ for $i \neq j$} \}, \] then $X_\zeta$ is a manifold. 

Outside a ball $B_{R^2}(0)$ containing all of $Z_\zeta$, the bundle $(X_\zeta)_{\vert\im \mathbb{H} \setminus B_{R^2}(0)}$ has Chern class $-N \in \mathbb{Z} \cong H^2(\im \mathbb{H}\setminus B_{R^2}(0),\mathbb{Z}).$ It is therefore isomorphic to the principal $\U(1)$-bundle
\begin{align*}
\pi_0 \col (\mathbb{H}\setminus B_R(0))/C_N &\to \im \mathbb{H}\setminus B_{R^2}(0) \\
[q] &\mapsto qi\bar{q}.
\end{align*}
With the right choice of such an isomorphism $\tau_\zeta$ (e.g. using parallel transport in radial direction and 'matching' the connections $\theta_\zeta$ and $\theta_0$ at the sphere at infinity) one can show that $\underline{\omega}_\zeta$ approaches the standard hyperkähler structure on $\mathbb{H}/C_N$ as in \autoref{def:RData}, \autoref{def:ALE-space}. The Gibbons--Hawking spaces are therefore ALE asymptotic to $\mathbb{H}/C_N$.

\item \label{bull:Lambda^2_+} Let $R \in N_{\SO(\mathbb{H})}(C_N)$. Identify\footnote{via $\im \mathbb{H}\ni\xi \mapsto \langle\underline{\omega},\xi \rangle \in \Omega^2(\mathbb{H})$ where $\underline{\omega} = \tfrac{-1}{2} \diff q \wedge \diff \bar{q} \in \Omega^2(\mathbb{H},(\im \mathbb{H})^*) $ is the standard hyperkähler structure on $\mathbb{H}$.} the space of self-dual $2$-vectors $\Lambda^2_+\mathbb{H}$ with $\im \mathbb{H}$ and denote by $\Lambda^2_+ R \in \SO(\im \mathbb{H})$ the induced map. Furthermore, define
\begin{align*}
\alpha_R \coloneqq 
\begin{cases}
1, \text{ if $R \in Z_{\SO(\mathbb{H})}(C_N)$} \\
-1, \text{ else}
\end{cases}
\end{align*}
where $Z_{\SO(\mathbb{H})}(C_N)$ denotes the centralizer of $C_N$ in $\SO(\mathbb{H})$. If $\zeta\in \Delta$ satisfies $ \Lambda^2_+R \zeta = \alpha_R \zeta$, then there exists an $\hat{R}\in \textup{Isom}(X_\zeta)$ satisfying \[(\hat{R}^* \otimes \Lambda^2_+R^*)\underline{\omega}_{\zeta} = \underline{\omega}_{\zeta} \quad \textup{and} \quad \tau_{\zeta} \circ \hat{R} = R \circ \tau_{\zeta} \] where $R$ acts on $\mathbb{H}/C_N$ in the obvious way. This is explained in more detail in \autoref{Sec:KronheimerALE}, \autoref{bul:ALE_group_action}. Note, however, that whenever $R\in N_{\SO(\mathbb{H})}(C_N)$ for $N\geq 3$ (which holds in all examples of \autoref{Sec:Examples}), then $\hat{R}$ can be constructed by lifting $\alpha_R \Lambda^2_+R \in \textup{O}(\im \mathbb{H})$ to a bundle (anti-) isomorphism on $X_\zeta \setminus \pi_\zeta^{-1}(Z_\zeta)$ and requiring that $\tau_\zeta \circ \hat{R} = R \circ \tau_\zeta$ and $\hat{R}^*(i \theta_\zeta) = \pm i \theta_\zeta$.
\item Let $\zeta_0 \neq \zeta_1 \in Z_\zeta$ and assume that the line segment \[\ell \coloneqq \{ t \zeta_{1} + (1-t) \zeta_{0} \mid t \in [0,1] \} \subset \Im \mathbb{H}\] intersect $Z_\zeta$ only in its endpoints. The preimage $\Sigma_\ell \coloneqq \pi_\zeta^{-1}(\ell)\subset X_\zeta$ is a smoothly embedded sphere, which is holomorphic with respect to the complex structure $I_{\zeta,\hat{\xi}}\coloneqq \langle \underline{I}_\zeta,\hat{\xi}\rangle$ for $\hat{\xi} \coloneqq \frac{\zeta_{1}-\zeta_0}{\vert \zeta_{1}-\zeta_0 \vert}\in \im\mathbb{H}$. 

Let now $R\in N_{\SO(\mathbb{H})}(C_N)$ satisfy $\Lambda^2_+R \zeta = \alpha_R \zeta$ and denote by $\hat{R}$ its lift to $X_\zeta$ (as described in \autoref{bull:Lambda^2_+} above). Then $\hat{R}(\Sigma_\ell) = \Sigma_{\alpha_R\Lambda^2_+R (\ell)}$, where $\alpha_R\Lambda^2_+R (\ell)$ denotes the line segment coming from applying $\alpha_R \Lambda^2_+R \in \textup{O}(\im \mathbb{H})$ to $\ell$. 
\end{enumerate}

\subsubsection{Kronheimer's construction of ALE spaces}\label{Sec:KronheimerALE}

All hyperkähler ALE $4$-manifolds asymptotic to $\mathbb{H}/\Gamma$ for any finite subgroup $\Gamma < \Sp(1)$ were constructed and classified by Kronheimer 
in~\cite{Kronheimer-ALE1} and~\cite{Kronheimer-ALE2}. The following summary follows the one given in~\cite[Remark~2.15]{DPSDTW-Associatives}. Note also that for $\Gamma =C_N$ this treatment is equivalent to \autoref{sub:GibbonsHawking}.
\begin{enumerate}
\item Let $\mathbb{C}[\Gamma] \coloneqq \textup{Maps}(\Gamma,\mathbb{C})$ denote the regular representation equipped with its standard Hermitian inner product. Furthermore, define \[ S \coloneqq (\mathbb{H} \otimes_\mathbb{R} \mathfrak{u}(\mathbb{C}[\Gamma]))^\Gamma \quad \text{and} \quad G\coloneqq \mathbb{P}\U(\mathbb{C}[\Gamma])^\Gamma\] and equip $S$ with the canonical flat hyperkähler structure. The adjoint action of $G$ on $S$ has a distinguished hyperkähler moment map \[\mu \col S \to (\im \mathbb{H})^*\otimes \mathfrak{g}^*.\] Let $\mathfrak{z}^* \subset \mathfrak{g}^*$ be the annihilator of $[\mathfrak{g},\mathfrak{g}]$, i.e. all elements in $\mathfrak{g}^*$ fixed by the coadjoint action of $G$. For any value $\zeta \in (\im \mathbb{H})^*\otimes \mathfrak{z}^*$, the hyperkähler quotient $X_\zeta \coloneqq \mu^{-1}(\zeta)/G$ is a hyperkähler orbifold asymptotic to $\mathbb{H}/\Gamma$ (\cite[Lemma~3.3 and Proposition~3.14]{Kronheimer-ALE1}).
\item \autoref{rem:ADE} associates a root system $\Phi$ to $\Gamma$. Kronheimer~\cite[Proposition~4.1]{Kronheimer-ALE1} defines an isomorphism between $\mathfrak{z}^*$ and the associated Cartan algebra $\mathfrak{h} \coloneqq (\mathbb{R}\Phi)^*$. For any root $\theta \in \Phi$ let $D_\theta \coloneqq \ker \theta \subset \mathfrak{h}$ be the associated wall of the Weyl chambers. If \[\zeta \in \tilde{\Delta}^\circ \coloneqq ((\im \mathbb{H})^* \otimes \mathfrak{h}) \setminus \cup_{\theta \in \Phi} ((\im \mathbb{H})^*\otimes D_\theta), \] then $X_\zeta$ is a manifold~(\cite[Proposition~2.8]{Kronheimer-ALE1}).
\item \label{bul:ALE_group_action} Any $R\in N_{\SO(\mathbb{H})}(\Gamma)$ acts on $\Gamma$ by conjugation. We extend this to a complex linear map $C_R \in \U(\mathbb{C}[\Gamma])$. The standard representation of $R$ on $\mathbb{H}$ tensored with the Adjoint action of $C_R$ on $\mathfrak{u}(\mathbb{C}[\Gamma])$ induces an action on $S$. The hyperkähler moment map satisfies \[\mu \circ (R\otimes \Ad_{C_R})= (\Lambda^2_+R^{-1} \otimes \Ad_{C_R^{-1}})^* \circ \mu \] where $\Lambda^2_+ R$ is as in \autoref{sub:GibbonsHawking} \autoref{bull:Lambda^2_+} and $\Ad^*_{C_R^{-1}}$ denotes the coadjoint representation of $C_R$ on $\mathfrak{h} \cong \mathfrak{z}^* \subset \mathfrak{g}^*$. Thus, if \[(\Lambda^2_+R\otimes \Ad_{C_R})^* \zeta = \zeta, \] we obtain an induced isometry $\hat{R}\in \text{Isom}(X_{\zeta})$ satisfying \[(\hat{R}^* \otimes \Lambda^2_+R^*)\underline{\omega}_{\zeta} = \underline{\omega}_{\zeta} \quad \textup{and} \quad \tau_{\zeta} \circ \hat{R} = R \circ \tau_{\zeta}. \]

\item\label{bul:description-Ad_C_R-in-Kronheimer} Here is one way to understand the coadjoint action of $\Ad_{C_R}^*$ on $\mathfrak{h}$ in the previous paragraph (see also~\cite[Section~3]{Joyce-CalabiYau}): First, we identify $\mathfrak{h} \cong \mathbb{R}\Phi$ via the inner product and  let $\{\theta_1,\dots,\theta_n\}\subset \Phi$ be a set of simple roots. Next, let $\{(R_1,\rho_1),\dots,(R_n,\rho_n)\}$ the set consisting of (representatives of) all isomorphism classes of irreducible non-trivial (complex) representations of $\Gamma$. The 
McKay-Correspondence~\cite{McKay-Graphs-Sing-finiteGroups} gives rise to a distinguished bijection between these two sets (cf.~\cite[Section~2]{Kronheimer-ALE1}). 

$\Ad^*_{C_{R}}$ acts on $\{\alpha_1,\dots,\alpha_n\} \cong \{(R_1,\rho_1),\dots,(R_n,\rho_n)\}$ by mapping $(R_i,\rho_i)$ to the irreducible representation $(R_j,\rho_j) \cong (R_i,\rho_i \circ C_{R})$ (where we precompose the representation with conjugation by $R \in N_{\SO(\mathbb{H})}(\Gamma)$). The map $\Ad^*_{C_{R}} \col \mathfrak{h}\to \mathfrak{h}$ is the linear extension of this action.\footnote{This can be seen when using the isomorphism $\tau\col \mathfrak{z}^*\to \mathfrak{h}$ in~\cite[Equation~(2.7)]{Kronheimer-ALE1}. Note further, that~\cite[Proposition~4.1]{Kronheimer-ALE1} implies that $\tau$ and the isomorphism in~\cite[Section~4]{Kronheimer-ALE1} only differ by a conformal transformation.} 

\item \label{bul: holomorphic curves in Kronheimer's ALE} Let $\zeta \in \tilde{\Delta}^\circ$ be fixed and $\theta \in \Phi$ be a root. Define $\xi \in \im \mathbb{H}$ by \[ \langle \xi,\cdot \rangle = \zeta(\theta)\in (\im \mathbb{H})^*\] and let $\hat{\xi} \coloneqq \xi/\vert \xi\vert$. Inside $X_\zeta$ lies a nodal Riemann surface $\Sigma_\theta$ which is holomorphic with respect to the complex structure $I_{\zeta,\hat{\xi}}\coloneqq \langle \underline{I}_\zeta, \hat{\xi} \rangle$.  

If $\theta_1,\theta_2 \in \Phi$ are two roots such that $\theta = \theta_1 + \theta_2$ and $\vert \zeta(\theta) \vert = \vert \zeta(\theta_1)\vert + \vert \zeta(\theta_2)\vert $, then $\Sigma_\theta$ is the union of the (nodal) $I_{\hat{\xi}}$-holomorphic curves $\Sigma_{\theta_1}$ and $\Sigma_{\theta_2}$ attached along one new pair of nodes. If no decomposition with this property exists, then $\Sigma_\theta$ is itself an embedded $2$-sphere.

Let now $R\in N_{\SO(\mathbb{H})}(\Gamma)$ satisfy $(\Lambda^2_+R \otimes \Ad_{C_R})^* \zeta = \zeta$ and denote by $\hat{R}$ its lift as described in \autoref{bul:ALE_group_action}. This isometry maps $\Sigma_\theta$ to the surface $\hat{R}(\Sigma_\theta)=\Sigma_{\Ad_{C_R}(\theta)}$ where $\Ad_{C_R}$ denotes the adjoint action on $(\mathbb{R}\Phi) \cong \mathfrak{z}\subset \mathfrak{g}$ (as described in \autoref{bul:description-Ad_C_R-in-Kronheimer}).

\item \label{bul: Kronheimer Weyl group} Denote by $W$ the Weyl group of $\Phi$. If two elements $\zeta_1,\zeta_2 \in \Delta^{\circ}$ are related by an element in $W$, then $X_{\zeta_1}$ and $X_{\zeta_2}$ are isomorphic as hyperkähler ALE spaces (cf.~\cite[Section~3]{Kronheimer-ALE2} and~\cite[Section~3]{AtiyahBielawski-Nahm}). This isomorphism identifies the holomorphic spheres $\Sigma_\theta \subset X_{\zeta_1}$ and $\Sigma_{w\theta} \subset X_{\zeta_2}$ where $w\in W$ satisfies $\zeta_2 = w \zeta_1$. Furthermore, one can arrange for this isomorphism to intertwine the asymptotic coordinates $\tau_{\zeta_1}$ and $\tau_{\zeta_2}$. We can therefore replace $\zeta \in \tilde{\Delta}^\circ$ in the previous discussion by \[ [\zeta] \in \Delta^\circ \coloneqq \tilde{\Delta}^\circ /W.\]
\end{enumerate}

\section{Perturbing coassociative submanifolds}\label{sec:Perturbing_coassociatives}

Throughout this section, $(Y,\phi)$ denotes a $7$-manifold equipped with a closed $\G_2$-structure.
\begin{definition}[{\cite[Corollary~IV.1.20]{HarveyLawson}}]
A $4$-dimensional immersed submanifold $\iota \col M \to Y$ is called coassociative if $\iota^*\phi = 0$. If we want to emphasize the underlying $\G_2$-structure we will write $\phi$-coassociative.
\end{definition}
\begin{example}\label{ex:coassociative_model}
Let $(X,\underline{\omega})$ be a hyperkähler $4$-manifold together with an action $\rho \col G \to \textup{Isom}(X)$ by a Bieberbach group $G$. We denote the corresponding $\G_2$-manifold from \autoref{ex_G2Model} by $(Y,\phi)$. Furthermore, note that the normal subgroup $\Lambda \coloneqq G \cap \im \mathbb{H} < \im \mathbb{H}$ is a lattice. An immersed coassociative submanifold inside of $Y$ can now be constructed from the following data:
\begin{enumerate}
\item An embedded Riemann surface $\iota_\Sigma \col \Sigma \to X$ which is holomorphic with respect to $I_{\hat{\xi}_1} = \langle \underline{I},\hat{\xi}_1 \rangle$ for $\hat{\xi}_1 \in S^2 \subset \im \mathbb{H}$.
\item Two linearly independent $\xi_2,\xi_3 \in \big\{\hat{\xi}_1\big\}^\perp \cap \Lambda \subset \Im \mathbb{H}$ such that $\rho(\xi_2)(\Sigma)= \Sigma =\rho(\xi_3)(\Sigma)$. 
\end{enumerate}
Furthermore, we require the choice of basepoint $q\in \im \mathbb{H}$. We then define \[ M \coloneqq ((\mathbb{R}\xi_2+\mathbb{R}\xi_3) \times \Sigma) / \langle\xi_2,\xi_3\rangle_\mathbb{Z} \] and $\iota_q\col M \to Y$ as $\iota_q \big( [y,z] \big) \coloneqq [q+y,\iota_\Sigma(z)].$ It immediately follows from~\eqref{eq:modelG2Structure} that $\iota_q$ is a coassociative immersion. Next, we discuss conditions under which $\iota_q$ is an embedding or factors through a covering map over an embedding. For this we assume that 
\begin{enumerate}
\setcounter{enumi}{2}
\item $\xi_2,\xi_3 \in \Lambda$ generate the sublattice $\Lambda \cap (\mathbb{R}\xi_2 + \mathbb{R}\xi_3)$ and $\rho(\Lambda)=\{1\}$. \label{bul:cond_embedding_1}
\end{enumerate}
We can then regard $T^2_q \coloneqq [q] + (\mathbb{R}\xi_2 + \mathbb{R} \xi_3)/\langle \xi_2,\xi_3\rangle_\mathbb{Z} \subset (\im \mathbb{H})/\Lambda$ as an embedded submanifold. Assume further that 
\begin{enumerate}
\setcounter{enumi}{3}
\item \label{bul:cond_embedding_2} $G/\Lambda \cong H_1 \times H_2$ where $H_1,H_2$ are (possibly trivial) groups that satisfy the following:
\begin{enumerate}
\item The only element $h\in H_1$ with $\rho(h)(\Sigma)\cap \Sigma \neq \emptyset$ and $h\cdot T_q \cap T_q \neq \emptyset$ is $h=1$. Note here and below that $G/\Lambda$ canonically acts on $X$ as $\rho(\Lambda)=\{1\}$.
\item Every $h\in H_2$ satisfies $\rho(h)(\Sigma)= \Sigma$ and $h\cdot T_q = T_q$.
\end{enumerate} 
\end{enumerate}
In this case, the (free) $H_2$ action lifts to $M$ and $\iota_q$ descends to an embedding $\overline{\iota_q} \col M/H_2 \to Y$. Note that the conditions in \autoref{bul:cond_embedding_2} (and the groups $H_1,H_2$) depend on the choice of $q$.
\end{example}
\begin{remark}\label{rem:modelcoassoc_families}
By varying the chosen basepoint $q\in \im \mathbb{H}$ in the construction of $\iota_q$ in the previous example as well as the $I_{\hat{\xi}_1}$-holomorphic embedding $\iota_\Sigma \col \Sigma \to X$, one produces a (up to reparametrisation) $(1+b_1(\Sigma))$-dimensional family of coassociative immersions (one dimension comes from varying $q$ and $b_1(\Sigma)$ from varying $\iota_\Sigma$). This is of course in accordance with~\cite[Theorem~4.5]{McLean} which implies that the moduli space of coassociative immersions $\iota \col M \to Y$ is itself an orbifold of dimension $b^2_+(M) = 1 +b_1(\Sigma)$.
\end{remark}

It is well-known (cf.~\cite[Proposition~4.2]{McLean}) that for a coassociative immersion $\iota\col M \to Y$, the mapping \[\iota^*TY \ni v \mapsto \iota^*(i_v\phi) \in \Lambda^2T^*M \] descends to an isomorphism between the normal bundle and $\Lambda^2_+T^*M$ (the bundle of self-dual $2$-forms).

\begin{definition}
A tubular neighbourhood of the coassociative immersion $\iota \col M \to Y$ is a convex open neighbourhood $U\subset \Lambda^2_+T^*M$ of the zero section together with an open immersion $\mathtt{J}\col U \to Y$ which restricts to $\iota$ at the zero section. Additionally, we demand that for any $u \in U$ the image of $\partial_t (\mathtt{J}(t u))_{\vert t=0} \in \iota^*TY$ in $\Lambda^2_+T^*M$ under the isomorphism described above is again $u$. 
\end{definition}
\begin{remark}
Subsequently, we may simply use the tubular neighbourhood induced by the Levi--Civita connection of the ambient manifold $Y$.
\end{remark}
Let now $\mathtt{J}\col U \to Y$ be a tubular neighbourhood. For any $\omega \in \Gamma(U)$ we denote by $\mathtt{J}_\omega \col M \to Y$ the immersion $x \mapsto \mathtt{J}(\omega_x)$. Furthermore, we define $F_\mathtt{J}\col \Gamma(U) \to \Omega^3(M)$ by $F_\mathtt{J}(\omega) = \mathtt{J}_\omega^*\phi$.
By definition, the immersed submanifold $\mathtt{J}_\omega\col M \to Y$ for $\omega \in \Gamma(U)$ is coassociative if and only if $F_\mathtt{J}(\omega) =0$.
\begin{proposition}[{\cite[Theorem~4.5]{McLean}}]\label{prop:linearisation}
Let $\mathtt{J} \col U\subset \Lambda^2_+M \to Y$ be a tubular neighbourhood of a coassociative immersion $\iota \col M \to Y$. Then the map $F_\mathtt{J}$ has image contained in $\diff \Omega^2(M)$. Furthermore, there exists  a smooth map $\mathcal{N}_\mathtt{J} \in C^{\infty}(\Gamma(U),\diff\Omega^2(M))$ that satisfies \[F_\mathtt{J}(\omega) = \diff \omega + \mathcal{N}_\mathtt{J}(\omega) \] and such that for each $k \in \mathbb{N}_0$ and $\alpha\in (0,1)$ there is a constant $c = c(\mathtt{J},k,\alpha)>0$ with \[ \Vert \mathcal{N}_\mathtt{J}(\omega)-\mathcal{N}_\mathtt{J}(\eta) \Vert_{C^{k,\alpha}} \leq c (1+R) (\Vert \omega \Vert_\hol{k+1} + \Vert \eta \Vert_\hol{k+1} ) \Vert \omega - \eta \Vert_\hol{k+1}  \] for any $\omega,\eta \in \Gamma(U)$ with $\Vert \omega\Vert_\hol{k+1},\Vert \eta\Vert_\hol{k+1} \leq R$.
\end{proposition}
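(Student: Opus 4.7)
The plan is to prove the three assertions—exactness of the image, that $DF_\mathtt{J}|_0 = \diff$, and the quadratic estimate on $\mathcal{N}_\mathtt{J}$—in sequence. The first two rest on a single homotopy argument, the third on a standard Taylor expansion.

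First, for exactness, I would use $\diff\phi = 0$ together with the homotopy $t \mapsto \mathtt{J}_{t\omega}$ on $[0,1]$. Letting $V_{t\omega}(x) := d\mathtt{J}|_{t\omega_x}(\omega_x) \in T_{\mathtt{J}(t\omega_x)} Y$ denote the velocity of this family, the Cartan-type formula for pullbacks by a $1$-parameter family of maps gives
\[\frac{\diff}{\diff t}\mathtt{J}_{t\omega}^*\phi = \diff\bigl(\mathtt{J}_{t\omega}^*(i_{V_{t\omega}}\phi)\bigr) + \mathtt{J}_{t\omega}^*(i_{V_{t\omega}}\diff\phi) = \diff\bigl(\mathtt{J}_{t\omega}^*(i_{V_{t\omega}}\phi)\bigr).\]
Integrating from $0$ to $1$ and using $\mathtt{J}_0^*\phi = \iota^*\phi = 0$ yields $F_\mathtt{J}(\omega) = \diff\Psi(\omega)$ for the $2$-form $\Psi(\omega) := \int_0^1 \mathtt{J}_{t\omega}^*(i_{V_{t\omega}}\phi)\,\diff t \in \Omega^2(M)$.

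For the linearisation, the tubular-neighbourhood convention $\partial_t \mathtt{J}(tu)|_{t=0} \mapsto u$ says precisely that $d\mathtt{J}|_0 \col \Lambda^2_+T^*_xM \to T_{\iota(x)}Y$ realises the inclusion of the normal bundle. Hence $V_0(\omega)(x) = d\mathtt{J}|_0(\omega_x)$ corresponds to $\omega_x$ under the normal-bundle isomorphism of the preceding lemma, so $\iota^*(i_{V_0(\omega)}\phi) = \omega$. Since $V_0 \equiv 0$ when $\omega = 0$, differentiating $\epsilon \mapsto \Psi(\epsilon\omega)$ at $\epsilon = 0$ only picks up the contribution from varying $V_{t\omega}$, yielding $D\Psi|_0(\omega) = \omega$ and hence $DF_\mathtt{J}|_0(\omega) = \diff\omega$. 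Setting $\mathcal{N}_\mathtt{J}(\omega) := F_\mathtt{J}(\omega) - \diff\omega$ produces the decomposition, with $\mathcal{N}_\mathtt{J}(\omega) \in \diff\Omega^2(M)$ following from the first step.

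For the quadratic estimate, I would pass to a local description: over a trivialising chart, $F_\mathtt{J}(\omega)(x)$ is a smooth function of $x$, $\omega(x)$, and $(\nabla\omega)(x)$ whose linearisation at the zero jet equals $\diff\omega$, so $\mathcal{N}_\mathtt{J}(\omega)$ corresponds to a smooth fibrewise function of the $1$-jet of $\omega$ vanishing to second order at the origin. The mean-value theorem gives
\[\mathcal{N}_\mathtt{J}(\omega) - \mathcal{N}_\mathtt{J}(\eta) = \int_0^1 D\mathcal{N}_\mathtt{J}|_{t\omega+(1-t)\eta}(\omega-\eta)\,\diff t,\]
and since $D\mathcal{N}_\mathtt{J}|_0 = 0$ the operator $D\mathcal{N}_\mathtt{J}|_\sigma \col C^{1,\alpha} \to C^{0,\alpha}$ has norm bounded by a constant times $\Vert\sigma\VertH{1}$, uniformly for $\sigma$ in a fixed $C^{1,\alpha}$-ball around zero. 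Sub-multiplicativity of Hölder norms combined with a finite cover of $M$ by trivialising charts then delivers the claimed estimate with a constant $c = c(\mathtt{J})$. The main technical point is precisely this Hölder bookkeeping: combining the standard Hölder composition and product estimates in the correct order and patching the local bounds into a uniform global one, all while making sure the constants depend only on $\mathtt{J}$. This is routine but is the only part of the argument that requires genuine care.
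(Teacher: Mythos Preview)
Your proof is correct and follows essentially the same structure as the paper's. The paper argues exactness cohomologically (the homotopy $t\mapsto\mathtt{J}_{t\omega}$ fixes the class $[F_\mathtt{J}(\omega)]$, which vanishes at $t=0$), while you go one step further and exhibit an explicit primitive $\Psi(\omega)$; the linearisation computations are the same in substance.

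The one place the two genuinely diverge is the quadratic estimate. The paper (in its appendix) computes $D_uDF_\mathtt{J}(v)(w)=u^*\pounds_{\hat v}\pounds_{\hat w}(\mathtt{J}^*\phi)$ and then expands the iterated Lie derivative in terms of a torsion-free connection to bound each term by hand; this yields an explicit constant and makes clear exactly which geometric quantities (the $C^{k+2}$-norm of $\mathtt{J}^*\phi$, curvature of the connection on $\Lambda^2_+T^*M$) enter. Your approach---viewing $\mathcal{N}_\mathtt{J}$ locally as a smooth function of the $1$-jet vanishing to second order, then invoking the mean-value theorem and Hölder composition estimates---is the standard abstract route and is cleaner to state, at the cost of the constant being less transparent. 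Both are valid; the paper's version is better suited to the application in \autoref{prop:coassociatives_examples}, where one must verify that the constant is independent of the degeneration parameter $t$.
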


The proof of this proposition except the estimate on $\mathcal{N}_\mathtt{J}$ can be found in~\cite[proof of Theorem~4.5]{McLean}. As the arguments are short, we have included them here for the reader's convenience.
\begin{proof}
Since $F_\mathtt{J}(0) =0$ and the cohomology class doesn't change under homotopies, we have that $[F_\mathtt{J}(t\omega)]=0 \in \H^3(M)$ for every $t\in [0,1]$. This proves the first point.

For the second point we observe that the Fundamental Theorem of Calculus and $F_\mathtt{J}(0)=0$ imply \[F_\mathtt{J}(\omega) = D_0 F_\mathtt{J} (\omega) + \int_0^1 \partial_t F_\mathtt{J}(t\omega) - D_0 F_\mathtt{J}(\omega) \diff t\] where $D_0F_\mathtt{J}$ denotes the linearisation of $F_\mathtt{J}$ at the zero section. It therefore remains to check that $D_0F_\mathtt{J}(\omega)$ equals $\diff \omega$ and \[\mathcal{N}_\mathtt{J}(\omega) \coloneqq \int_0^1 \partial_t F_\mathtt{J}(t\omega) - D_0F_\mathtt{J}(\omega)\diff t \] satisfies the quadratic estimate.

For every point $x \in M$ there exists a vector field $v \in  \Gamma(TY)$ such that in an open neighbourhood around $x$ we have $\omega = \iota^*(i_v\phi)$ and $\varphi^v_t \circ \iota=\mathtt{J}_{t\omega}$ for $t\in (-\varepsilon,\varepsilon)$ where $\varphi^v$ denotes the flow of $v$. Since $\phi$ is closed, we obtain around $x$: \[ D_0F_\mathtt{J}(\omega) = \partial_t (\iota^*(\varphi^v_t)^*\phi)_{\vert t=0} = \iota^* \diff (i_v \phi) = \diff \omega. \]

\noindent The estimate for $\mathcal{N}_\mathtt{J}$ is standard but rather lengthy and can be found in \autoref{Sec:quadraticestimate}.
\end{proof}
\begin{theorem}\label{theo:perturbing_Coassociatives}
Let $\alpha \in (0,1)$ be a fixed Hölder-exponent and $\beta,\gamma,c,R>0$ be constants with $\beta > 2 \gamma$. Then there are $T,c_v > 0$ depending only on $\beta,\gamma,c,R$ with the following significance: Let $\phi,\phi_0$ be two closed $G_2$-structures on $Y$ and $\iota \col M \to Y$ be an immersed $\phi_0$-coassociative submanifold with tubular neighbourhood $\mathtt{J} \col U \subset \Lambda^2_+M \to Y$ that satisfy  
\begin{enumerate}
\item $B_R(0)\subset U$
\item $\iota^*[\phi]=0\in \H^3(M)$
\item $\Vert \mathtt{J}^*(\phi-\phi_0) \VertH{1} \leq c t^\beta$
\item \label{item:lineat_estimate} $\diff \col (\mathcal{H}^2_+)^\perp \subset  \Omega^2_+(M) \to \Omega^3(M)$ satisfies $\Vert \omega \VertH{2} \leq c t^{-\gamma} \Vert \diff \omega \VertH{1}$, where $(\mathcal{H}^2_+)^\perp$ denotes the $L^2$-orthogonal complement of the space of harmonic self-dual $2$-forms
\item $\Vert \mathcal{N}_\mathtt{J}(\omega)-\mathcal{N}_\mathtt{J}(\eta) \Vert_{C^{1,\alpha}} \leq c (\Vert \omega \Vert_\hol{2} + \Vert \eta \Vert_\hol{2} ) \Vert \omega - \eta \Vert_\hol{2} $
\end{enumerate}
for some $t \in (0,T)$. Then there is a unique section $\omega \in \Gamma(U)\cap (\mathcal{H}^2_+)^\perp$ with $\Vert \omega \VertH{2} \leq c_v t^{\beta-\gamma}$ (where $c_v>0$ is determined in the proof) such that $\mathtt{J}_\omega$ is $\phi$-coassociative.
\end{theorem}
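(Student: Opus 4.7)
My plan is to recast the coassociative condition $\mathtt{J}_\omega^*\phi = 0$ as a Banach fixed-point equation on a small ball in $C^{1,\alpha}$. Setting $\Phi := \mathtt{J}^*(\phi-\phi_0)$ and applying \autoref{prop:linearisation} to $\phi_0$ (with respect to which $\iota$ is coassociative), the equation reads
\[
\diff\omega + \mathcal{N}_\mathtt{J}(\omega) + \omega^*\Phi = 0.
\]
Each summand on the left lies in $\diff\Omega^2(M)$: the first tautologically, the second by \autoref{prop:linearisation}, and the third since $[\omega^*\Phi] = [\iota^*\phi] - [\iota^*\phi_0] = 0$ by hypothesis~2 together with $\iota^*\phi_0 = 0$ and the homotopy $\mathtt{J}_{s\omega}\simeq\iota$. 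Hypothesis~4 combined with the closed-range theorem for elliptic operators then furnishes a bounded right-inverse $G$ of $\diff \colon (\mathcal{H}^2_+)^\perp \to \Omega^3(M)$ satisfying $\|G\|_{C^{0,\alpha}\to C^{1,\alpha}} \leq ct^{-\gamma}$. The equation is thereby recast as the fixed-point problem $\omega = T(\omega)$ for $T(\omega) := -G\bigl(\mathcal{N}_\mathtt{J}(\omega) + \omega^*\Phi\bigr)$.

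I then apply Banach's contraction principle on the closed ball $B := \{\omega \in \Gamma(U) : \|\omega\|_{C^{1,\alpha}} \leq c_v t^{\beta-\gamma}\}$ with $c_v$ to be chosen. For the self-map property, hypothesis~5 with $\eta = 0$ gives $\|\mathcal{N}_\mathtt{J}(\omega)\|_{C^{0,\alpha}} \leq cc_v^2 t^{2(\beta-\gamma)}$, and a chain-rule estimate for the pull-back along a small section yields $\|\omega^*\Phi\|_{C^{0,\alpha}} \leq 2C\|\Phi\|_{C^{0,\alpha}} \leq 2Cct^\beta$. Combining via $\|G\|\leq ct^{-\gamma}$:
\[
\|T(\omega)\|_{C^{1,\alpha}} \leq ct^{-\gamma}\bigl(cc_v^2 t^{2(\beta-\gamma)} + 2Cct^\beta\bigr) \leq c_v t^{\beta-\gamma},
\]
after fixing $c_v \geq 4Cc^2$ and choosing $T_0$ so small that $\beta>2\gamma$ makes the quadratic contribution negligible.

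For the contraction, hypothesis~5 directly gives $\|\mathcal{N}_\mathtt{J}(\omega) - \mathcal{N}_\mathtt{J}(\eta)\|_{C^{0,\alpha}} \leq 2cc_v t^{\beta-\gamma}\|\omega-\eta\|_{C^{1,\alpha}}$. The analogous Lipschitz estimate for the source term is the main technical point: expanding pointwise
\[
(\omega^*\Phi)_x - (\eta^*\Phi)_x = \bigl[\Phi_{\omega(x)} - \Phi_{\eta(x)}\bigr](D\omega,\cdot,\cdot) + \Phi_{\eta(x)}\bigl(D(\omega-\eta),\cdot,\cdot\bigr) + \dotsb,
\]
the first summand is bounded by $\|\Phi\|_{C^{0,\alpha}}\|\omega-\eta\|_{C^0}^\alpha$ using the fiber-Hölder continuity of $\Phi$, and the second by $\|\Phi\|_{C^0}\|\omega-\eta\|_{C^1}$. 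Since only a $C^{0,\alpha}$ bound on $\Phi$ is available, the Hölder factor $\|\omega-\eta\|_{C^0}^\alpha$ must be converted into a Lipschitz one via the smallness $\|\omega-\eta\|_{C^0} \leq 2c_v t^{\beta-\gamma}$, which costs an additional factor $(2c_v t^{\beta-\gamma})^{\alpha-1}$. This yields a Lipschitz constant of $T$ of order $O(t^{\beta-2\gamma}) + O(t^{\alpha(\beta-\gamma)})$, both exponents being positive since $\beta > 2\gamma$. Hence $T$ is a strict contraction on $B$ for $t < T_0$ small, and Banach's fixed-point theorem produces the required $\omega \in B$; the hypothesis $\beta > 2\gamma$ is precisely what makes this trade-off go through.
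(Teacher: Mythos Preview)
Your overall scheme coincides with the paper's: you set up the identical contraction map (the paper writes $E(\omega)=\diff^{-1}\bigl(F_0(\omega)-F(\omega)-\mathcal N_0(\omega)\bigr)$, which unwinds to your $-G(\mathcal N_{\mathtt J}(\omega)+\omega^*\Phi)$) and run Banach on a ball of radius $\sim t^{\beta-\gamma}$. The exactness check, the right inverse, and the self-map estimate are all fine.

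The genuine error is in your Lipschitz estimate for the source term. You claim
\[
\|\omega-\eta\|_{C^0}^{\alpha}\ \le\ (2c_vt^{\beta-\gamma})^{\alpha-1}\,\|\omega-\eta\|_{C^0}
\]
on the ball, but the inequality points the wrong way: since $\alpha-1<0$ the map $x\mapsto x^{\alpha-1}$ is decreasing, so $x\le r$ gives $x^{\alpha-1}\ge r^{\alpha-1}$, i.e.\ $x^{\alpha}\ge r^{\alpha-1}x$. More bluntly, $x^\alpha/x=x^{\alpha-1}\to\infty$ as $x\to0$, so no finite constant converts an $\alpha$-H\"older bound into a Lipschitz one near the diagonal. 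With only $C^{0,\alpha}$ control on $\Phi$, the map $\omega\mapsto\omega^*\Phi$ is genuinely only $\alpha$-H\"older (already in $C^0$: the term $[\Phi_{\omega(x)}-\Phi_{\eta(x)}](D\omega,\cdot,\cdot)$ is bounded by $[\Phi]_\alpha\|\omega-\eta\|_{C^0}^\alpha$, not by a multiple of $\|\omega-\eta\|$). Your asserted contraction rate $O(t^{\alpha(\beta-\gamma)})$ therefore has no foundation, and the fixed-point argument as written does not close.

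The paper does not attempt this conversion; it simply records the Lipschitz bound $\|E(\omega)-E(\eta)\|_{C^{1,\alpha}}\le c_E(r+t^\beta)t^{-\gamma}\|\omega-\eta\|_{C^{1,\alpha}}$ as following ``by our assumptions''. If you want an honest justification, note that in every application \autoref{theo:Platt} actually supplies a $C^{1,\alpha}$ bound $\|\mathtt J^*(\phi-\phi_0)\|_{C^{1,\alpha}}\le ct^\beta$. With one derivative on $\Phi$ available, the mean-value identity
\[
\omega^*\Phi-\eta^*\Phi=\int_0^1\bigl(\eta+s(\omega-\eta)\bigr)^*\bigl(\nabla_{\widehat{\omega-\eta}}\Phi+\langle\nabla\widehat{\omega-\eta}\wedge\Phi\rangle\bigr)\,\diff s
\]
yields $\|\omega^*\Phi-\eta^*\Phi\|_{C^{0,\alpha}}\le C\|\Phi\|_{C^{1,\alpha}}\|\omega-\eta\|_{C^{1,\alpha}}$, and hence a genuine contraction with rate $O(t^{\beta-2\gamma})$. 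The H\"older-to-Lipschitz trick should be discarded.
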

The analogue statement for associative submanifolds can be found in~\cite[Proposition~3.19]{DPSDTW-Associatives} and its proof carries over with only minor adaptations. We have included it here for the convenience of the reader.
\begin{proof}
To ease notation we drop the subscript $\mathtt{J}$ and instead denote by $F_{(0)}(\omega) \coloneqq \mathtt{J}_\omega^*\phi_{(0)}$. Since $\diff_{\vert \Omega^2_+(M)} \col \Omega^2_+(M) \to \diff \Omega^2(M)$ is surjective and $\textup{image}(F_{(0)})\subset \diff \Omega^2(M)$ by \autoref{prop:linearisation} and the second assumption, we can define \[ E(\omega) \coloneqq \diff_{\vert (\mathcal{H}^2_+)^\perp\cap \Omega^2_+(M)}^{-1} (F_0(\omega) - F(\omega) - \mathcal{N}_0(\omega)). \] By our assumptions there is a positive constant $c_E=c_E(c,R)$ such that for every $r \in (0,R)$ and $\omega,\eta \in \overline{B_r(0)} \subset C^{2,\alpha}\Gamma(U)$ the following two inequalities hold:
\begin{align*}
\Vert E(0) \VertH{2} &\leq c_E t^{\beta-\gamma} \\
\Vert E(\omega) - E(\eta) \VertH{2} &\leq  c_E (r+t^\beta) t^{-\gamma} \Vert \omega - \eta \VertH{2}.
\end{align*}
Therefore, $E$ restricts to a contraction on $\overline{B_r(0)}$ provided that \[ c_E (r+t^\beta)t^{-\gamma} < 1 \qquad \text{and} \qquad c_E t^{\beta-\gamma} + c_E (r+t^\beta) t^{-\gamma}r \leq r.\] This holds if we choose $T = T(\beta,\gamma,c,R)$ sufficiently small and for $t\in (0,T)$ the radius $r \coloneqq 2 c_Et^{\beta-\gamma}$. 

\noindent Let now $\omega\in \overline{B_r(0)}$ be the unique fixpoint of $E$. By definition, this satisfies \[0 = \diff \omega + \mathcal{N}_0 (\omega) -F_0(\omega) + F(\omega) = F(\omega) \] and gives therefore rise to a $\phi$-coassociative submanifold (of regularity $C^{2,\alpha}$). For sufficiently small $T$ this section and the corresponding submanifold are smooth by elliptic regularity (cf.~\cite[Proposition~7.16]{Lotay-CSCoassociatives_desing})
\end{proof}

\begin{remark}\label{rem:EmbCoa}
If $M$ in the previous theorem is compact and $\iota \col M \to Y$ an embedding, then $\mathtt{J}_\omega$ will also be an embedding once $t$ is sufficiently small.
\end{remark}

\subsection{The linear estimate for surface fibrations over tori}

The following subsection establishes \hyperref[item:lineat_estimate]{Assumption~\ref*{item:lineat_estimate}} of \autoref{theo:perturbing_Coassociatives} in the case of \autoref{ex:coassociative_model}. We quickly review the relevant set-up: Let $\Sigma$ be a closed Riemann surface equipped with a Riemannian metric $g_\Sigma$ and $\xi_2,\xi_3 \in \mathbb{R}^2$ be linearly independent. Furthermore, assume that there is a group action $\rho \col \langle \xi_2,\xi_3 \rangle_\mathbb{Z} \to \textup{Isom}(\Sigma)$. Our coassociative submanifold in \autoref{ex:coassociative_model} was then defined as \[M = (\mathbb{R}^2 \times \Sigma )/\langle\xi_2,\xi_3\rangle_\mathbb{Z} \] equipped with the induced metric coming from $g_\Sigma$ and $g_{\mathbb{R}^2}$. We will also need the following rescaled version: \[M_t \coloneqq (\mathbb{R}^2 \times \Sigma )/\langle t^{-1}\xi_2,t^{-1}\xi_3\rangle_\mathbb{Z} \] for $t>0$ where $\rho_t \col \langle t^{-1}\xi_2,t^{-1}\xi_3 \rangle_\mathbb{Z} \to \textup{Isom}(\Sigma)$ is given by $\rho(t\cdot)$. The induced metric on $M_t$ is denoted by $g_t$.

Observe that the natural projection $p_t \col M_t \to T^2_t \coloneqq \mathbb{R}^2/\langle t^{-1}\xi_2,t^{-1}\xi_3\rangle_\mathbb{Z}$ gives rise to a fiber bundle. The orthogonal complement $H_t\coloneqq V_t^\perp$ of its vertical tangent bundle $V_t = \ker(Dp_t)$ defines a flat Ehresmann connection. This induces a decomposition $\Omega^\ell(M_t) = \oplus_{p+q=\ell} \Omega^{p,q}$ with $\Omega^{p,q}\coloneqq \Gamma(\Lambda^p H^* \otimes \Lambda^q V^*)$. Furthermore, the operator $\diff + \diff^* \col \Omega^k(M_t) \to \Omega^{k+1}(M_t) \oplus \Omega^{k-1}(M_t)$ splits into 
\[ \diff_{H} + \diff^*_{H} \col \Omega^{p,q} \to \Omega^{p+1,q} \oplus \Omega^{p-1,q} \quad \textup{and} \quad \diff_V + \diff_V^* \col \Omega^{p,q} \to \Omega^{p,q+1} \oplus \Omega^{p,q-1}. \]
\begin{definition} We define the following operators acting on $\Omega^\ell(M_t)$:
\begin{enumerate}
\item Denote by $\Pi_t\in \End( \Omega^{\ell}(M_t))$ the $L^2$-projection onto $\ker(\diff + \diff^*)$.
\item For any $y \in T^2_t$, let $\res_y \col \Omega^{p,q}(M_t) \to \Lambda^pT^*_yT^2_t\otimes \Omega^q(p_t^{-1}(y))$ be the composition 
\begin{align*}
\Omega^{p,q}(M_t) \to \Gamma(p^{-1}_t(y),\Lambda^p H^* \otimes \Lambda^q V^*) \cong \Lambda^p T^*_yT^2_t \otimes \Omega^q(p_t^{-1}(y)).
\end{align*} 
\item The operator $\diff_V + \diff_V^*$ restricts for every $y \in T^2_t$ to an elliptic operator on $\Lambda^pT^*_yT^2_t \otimes \Omega^q(p_t^{-1}(y))$. Denote by $\pi_y$ the $L^2$-orthogonal projection onto its kernel.
\item Finally, denote by $\hat{\pi}\in \End(\Omega^\ell(M_t))$ the operator which maps $\omega \in \Omega^\ell(M_t)$ to the unique $\hat{\pi}(\omega)\in \Omega^\ell(M_t)$ with $\res_y\hat{\pi}(\omega) \coloneqq \pi_y(\res_y\omega)$ for every $y \in T^2_t$.
\end{enumerate}
\end{definition}

\begin{remark}
In all examples of \autoref{Sec:Examples} the fiber bundle $M_t = T^2_t \times \Sigma$ is trivial. In this case $\Omega^{p,q}\cong \Omega^p(T^2_t,\Omega^q(\Sigma))$ and $\diff_V + \diff_V^*$ becomes $\diff_\Sigma + \diff_\Sigma^*$ acting upon $\Omega^q(\Sigma)$. The operator $\hat{\pi} \col \Omega^{p,q} \to \Omega^{p,q}$ is then simply the $L^2$-projection onto $\ker(\diff_\Sigma+\diff_\Sigma^*)$. Furthermore, $\diff_H + \diff_H^* = \diff_{T_t^2} + \diff^*_{T_t^2}$.
\end{remark}

\noindent The main result of this section is the following Fredholm estimate:
\begin{proposition}\label{prop:linear_estimate}
For every $\alpha \in (0,1), k\geq 1$ there is a constant $c=c(k,\alpha,M_1,g_1)$ such that for every $t\in \mathbb{R}^+$ and $\omega \in \Omega^\ell(M_t)$, \[ \Vert \omega \VertH{k} \leq c  ((1+t^{-1})\Vert \diff \omega + \diff^* \omega \VertH{k-1} + \Vert \Pi_t \omega \VertH{k}). \]
\end{proposition}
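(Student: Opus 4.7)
The plan is to reduce the Fredholm estimate for $D := \diff + \diff^*$ on $(M_t, g_t)$ to two essentially independent sub-estimates using the fiberwise-harmonic projector $\hat{\pi}$. The degeneration factor $(1+t^{-1})$ will arise solely from the diverging diameter of the flat base torus $T^2_t$ (via a Poincaré-type inequality on $T^2_t$), while the fiberwise component contributes a bound that is uniform in $t$.

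First I would establish a uniform interior Schauder estimate. Since the metric $g_t$ agrees with $g_1$ on each fiber $\Sigma$ and is flat along $T^2_t$, the injectivity radius of $(M_t, g_t)$ is bounded below uniformly in $t \in (0,T]$ (the torus only grows as $t \to 0$) and the Riemann tensor is uniformly bounded. Standard Schauder estimates for the first-order elliptic operator $D$ on balls of a fixed radius $r_0 < \textup{inj}(M_1)$ yield $\Vert \omega \VertH{k} \leq C (\Vert D\omega \VertH{k-1} + \Vert \omega \Vert_{C^0})$ with $C$ independent of $t$. It therefore suffices, after Sobolev embedding on unit balls, to prove the $L^2$ version of the estimate.

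Next I would decompose $\omega = \hat{\pi}\omega + \omega^{\perp}$ and verify that $\hat{\pi}$ commutes with $D$. Indeed, $\hat{\pi}\circ(\diff_V+\diff_V^*) = 0 = (\diff_V+\diff_V^*)\circ\hat{\pi}$ trivially; and because the Ehresmann connection $H_t$ is flat with monodromy acting by isometries on $\Sigma$, horizontal parallel transport preserves fiberwise harmonicity, so $[\hat{\pi}, \diff_H+\diff_H^*]=0$. On $\omega^{\perp}$ the vertical operator has spectrum bounded below fiberwise by $\lambda_1(\Sigma)^{1/2} > 0$ uniformly, giving
\[ \Vert \omega^{\perp} \Vert_{L^2} \leq \lambda_1^{-1/2}\,\Vert (\diff_V+\diff_V^*)\omega^{\perp} \Vert_{L^2} \leq \lambda_1^{-1/2}\,\Vert D\omega \Vert_{L^2},\]
independently of $t$. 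For $\hat{\pi}\omega$ I would use that its image is naturally identified with forms on $T^2_t$ valued in the flat finite-rank bundle of fiberwise harmonic forms, on which $D$ reduces to the twisted Hodge operator of $T^2_t$. Rescaling by $t$ to identify $T^2_t$ with $T^2_1$ shows the first nonzero eigenvalue of this twisted Laplacian scales like $t^2$, so
\[ \Vert \hat{\pi}\omega - \Pi_t\omega \Vert_{L^2} \leq C t^{-1}\,\Vert D\hat{\pi}\omega \Vert_{L^2},\]
where I use that $\Pi_t\omega$ already lies in the image of $\hat{\pi}$ (global harmonics are in particular fiberwise harmonic). Summing both contributions and bootstrapping via the uniform Schauder estimate yields the claim.

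The main obstacle I expect is the rigorous verification that $\hat{\pi}$ commutes with $\diff_H+\diff_H^*$, together with a careful tracking of how the Hölder seminorms on $\hat{\pi}\omega$ rescale under the identification $T^2_t \cong T^2_1$ and interact with the Sobolev embedding. Both rely crucially on the flatness of $H_t$ and the isometric action of the monodromy on $\Sigma$, which is precisely the structure guaranteed by the hyperkähler setup of \autoref{ex:coassociative_model}.
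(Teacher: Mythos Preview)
Your overall architecture coincides with the paper's: a $t$-uniform Schauder estimate reduces matters to a $C^0$ bound, one splits $\omega = \hat\pi\omega + (1-\hat\pi)\omega$, treats the two pieces separately, and closes using $[\hat\pi,\diff+\diff^*]=0$. The paper handles $(1-\hat\pi)\omega$ by a contradiction/compactness argument on $\mathbb{R}^2\times\Sigma$ directly in $C^{k,\alpha}$, and handles $\hat\pi\omega$ by the scaling $\Phi_t\colon M_1\to M_t$ directly in $C^0$; both avoid any global $L^2$ norm on $M_t$.

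The gap in your proposal is the passage through $L^2(M_t)$. Since $\mathrm{Vol}(M_t)\sim t^{-2}$, every conversion between $C^0$ and $L^2$ on $M_t$ costs a factor $t^{-1}$. Concretely: after your $L^2$ Fredholm estimate $\Vert\omega\Vert_{L^2}\le C\big((1+t^{-1})\Vert D\omega\Vert_{L^2}+\Vert\Pi_t\omega\Vert_{L^2}\big)$, feeding this back into the Schauder bound via local Sobolev requires $\Vert D\omega\Vert_{L^2(M_t)}\le \mathrm{Vol}(M_t)^{1/2}\Vert D\omega\Vert_{C^0}\sim t^{-1}\Vert D\omega\Vert_{C^0}$, and similarly for $\Pi_t\omega$. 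This produces $(1+t^{-1})t^{-1}$ in front of $\Vert D\omega\Vert_{C^{k-1,\alpha}}$ and a spurious $t^{-1}$ in front of $\Vert\Pi_t\omega\Vert$, which is strictly worse than the claimed bound and would not suffice in the application (where one needs $\gamma=1$). The fix is exactly what the paper does: run the scaling argument for $\hat\pi\omega$ in $C^0$ rather than $L^2$ (the $C^0$ norm is volume-insensitive, and under $\Phi_t$ the bidegree components pick up clean powers of $t$), and obtain the $(1-\hat\pi)$-estimate in $C^{k,\alpha}$ directly, e.g.\ by compactness on $\mathbb{R}^2\times\Sigma$. Your fiberwise spectral-gap idea can be salvaged if you use it pointwise in the base (i.e.\ control $\sup_y\Vert\omega^\perp(y,\cdot)\Vert_{L^2(\Sigma)}$) rather than globally on $M_t$, but as written the reduction ``it suffices to prove the $L^2$ version'' is not justified.
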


For this we use the following results on harmonic forms on $\mathbb{R}^2\times \Sigma$ and $M_t$ which are an immediate consequence of~\cite[Lemma~A.1]{Walpuski-InstantonsKummer}.
\begin{lemma}[{\cite[Corollary~4.13]{Platt-Estimates}}]\label{lem:harmonic_forms}
Every harmonic $\omega \in \Omega^\ell(\mathbb{R}^2\times \Sigma)$ with $\Vert \omega \VertC{0} < \infty$ is a sum of terms of the form $\eta_1\otimes \eta_2$, where $\eta_1 \in \Omega^p(\mathbb{R}^2)$ is constant and $\eta_2 \in \Omega^q(\Sigma)$ is harmonic. Identifying the space of constant forms on $\mathbb{R}^2$ with $\Lambda^*\mathbb{R}^2$ we therefore have \[ \mathcal{H}^{\ell}(\mathbb{R}^2\times \Sigma) \cap C^{0}\Omega^\ell(\mathbb{R}^2\times \Sigma) = \bigoplus_{p+q} \Lambda^p\mathbb{R}^2 \otimes \mathcal{H}^q(\Sigma).\]
\end{lemma}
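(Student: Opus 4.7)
The plan is to separate variables along the product $\mathbb{R}^2\times \Sigma$, exploiting the discrete spectrum of $\Delta_\Sigma$ on the closed manifold $\Sigma$ together with the absence of bounded eigenfunctions of $\Delta_{\mathbb{R}^2}$ with positive eigenvalue. The containment $\supset$ in the displayed equation is obvious, so only $\subset$ requires work.

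First I would use that, because the metric is a product, the Hodge Laplacian preserves the bigrading $\Omega^\ell(\mathbb{R}^2\times \Sigma) = \bigoplus_{p+q=\ell}\Omega^{p,q}$ and splits as $\Delta = \Delta_{\mathbb{R}^2}^{(p)} + \Delta_\Sigma^{(q)}$; so one may restrict to a fixed bidegree $(p,q)$. Trivialising $\Lambda^pT^*\mathbb{R}^2$ by the standard coframe, a pure-type form $\omega^{p,q}$ becomes a smooth map $\mathbb{R}^2 \to \Lambda^p\mathbb{R}^2\otimes \Omega^q(\Sigma)$. Choosing an $L^2$-orthonormal eigenbasis $\{\psi_i\}$ of $\Omega^q(\Sigma)$ with eigenvalues $\lambda_i\geq 0$ (which exists because $\Sigma$ is closed and $\Delta_\Sigma$ is self-adjoint and elliptic), I would expand $\omega^{p,q}(x,y) = \sum_i c_i(x)\otimes \psi_i(y)$ with smooth coefficients $c_i\col \mathbb{R}^2\to \Lambda^p\mathbb{R}^2$. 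The equation $\Delta\omega = 0$ then decouples into $\Delta_{\mathbb{R}^2}c_i + \lambda_i c_i = 0$ for every $i$.

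Bessel's inequality at each fixed $x$ gives $\sum_i \abs{c_i(x)}^2 \leq \Vert \omega\VertC{0}^2 \cdot \Vol(\Sigma)$, so every scalar component $c_i^{(j)}$ is a bounded function on $\mathbb{R}^2$. For $\lambda_i>0$, taking tempered-distribution Fourier transforms (legitimate since $L^\infty(\mathbb{R}^2) \subset \mathcal{S}'(\mathbb{R}^2)$) converts the equation to $(\abs{\xi}^2+\lambda_i)\widehat{c_i^{(j)}} = 0$, and positivity of $\abs{\xi}^2+\lambda_i$ forces $c_i^{(j)}\equiv 0$. For $\lambda_i = 0$, each $c_i^{(j)}$ is a bounded harmonic function on $\mathbb{R}^2$, hence constant by Liouville's theorem, whence $c_i\in \Lambda^p\mathbb{R}^2$ and $\psi_i\in \mathcal{H}^q(\Sigma)$. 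Since $\mathcal{H}^q(\Sigma)$ is finite-dimensional, only finitely many terms survive and $\omega$ lies in $\bigoplus_{p+q=\ell}\Lambda^p\mathbb{R}^2\otimes \mathcal{H}^q(\Sigma)$.

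The main technical point is to justify that the fibrewise expansion $\omega^{p,q}(x,\cdot) = \sum_i c_i(x)\psi_i$ may be differentiated term-by-term in $x$, so that $\Delta\omega = 0$ genuinely implies $\Delta_{\mathbb{R}^2}c_i + \lambda_i c_i = 0$. This works because elliptic regularity for $\Delta\omega = 0$ renders $\omega$ smooth, the coefficients $c_i(x) = \langle \omega(x,\cdot),\psi_i\rangle_{L^2(\Sigma)}$ depend smoothly on $x$ with uniform bounds, and the expansion converges in every $C^k(\Sigma)$-norm uniformly on compact subsets of $\mathbb{R}^2$ thanks to compactness of $\Sigma$. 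I expect this bookkeeping to be the only delicate step; the substance of the argument—Liouville plus Fourier vanishing for $\lambda_i>0$—is entirely elementary.
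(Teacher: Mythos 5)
Your proof is correct. It cannot be compared against an in-paper argument, because the paper does not prove this lemma at all: it is imported as~\cite[Corollary~4.13]{Platt-Estimates} and described as an immediate consequence of~\cite[Lemma~A.1]{Walpuski-InstantonsKummer}, so your write-up supplies a self-contained proof of a statement the paper delegates to the literature; the cited argument is in the same spirit (spectral decomposition along the compact factor, Liouville-type vanishing on the Euclidean factor), so your route is a faithful reconstruction rather than a genuinely different method. One simplification: the ``main technical point'' you flag --- term-by-term differentiation of the eigenform expansion --- can be avoided entirely. Define $c_i(x) \coloneqq \langle \omega(x,\cdot),\psi_i\rangle_{L^2(\Sigma)}$; since $\omega$ is smooth (elliptic regularity), you may differentiate under the integral sign, and then self-adjointness of $\Delta_\Sigma$ on the closed surface gives
\[
\Delta_{\mathbb{R}^2} c_i \;=\; \langle \Delta_{\mathbb{R}^2}\,\omega(x,\cdot),\psi_i\rangle_{L^2(\Sigma)} \;=\; -\langle \Delta_\Sigma\,\omega(x,\cdot),\psi_i\rangle_{L^2(\Sigma)} \;=\; -\langle \omega(x,\cdot),\Delta_\Sigma\psi_i\rangle_{L^2(\Sigma)} \;=\; -\lambda_i c_i ,
\]
so the decoupled equations hold with no statement about the convergence of the series beyond fibrewise $L^2$-completeness of the eigenbasis, which is all you need at the end to identify $\omega(x,\cdot)$ with the finite surviving sum for each $x$ (both sides are continuous, so fibrewise $L^2$-equality upgrades to equality of forms). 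Your two analytic ingredients are sound as stated: for $\lambda_i>0$ the multiplier $1/(\abs{\xi}^2+\lambda_i)$ is smooth with all derivatives of polynomial growth, so $(\abs{\xi}^2+\lambda_i)\widehat{c_i}=0$ in $\mathcal{S}'(\mathbb{R}^2)$ does force $\widehat{c_i}=0$; for $\lambda_i=0$ Liouville applies componentwise in the parallel trivialisation of $\Lambda^pT^*\mathbb{R}^2$.
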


\begin{corollary}\label{cor:harmonic_forms_M_t}
The pull-back of the quotient map $q_t \col \mathbb{R}^2 \times \Sigma \to M_t$ induces an isomorphism \[ \mathcal{H}^\ell (M_t) \cong \bigoplus_{p+q=\ell} \Lambda^p \mathbb{R}^2 \otimes \mathcal{H}^q(\Sigma)^{\langle \xi_2,\xi_3 \rangle_\mathbb{Z}}, \] where $\mathcal{H}^*(\Sigma)^{\langle \xi_2,\xi_3\rangle_\mathbb{Z}}$ denotes the space of harmonic forms on $\Sigma$ invariant under the action of $\langle \xi_2,\xi_3 \rangle_\mathbb{Z}$ by the pull-back of $\rho$.
\end{corollary}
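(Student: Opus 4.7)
The plan is to combine \autoref{lem:harmonic_forms} with the standard identification of forms on a quotient with invariant forms on the cover.

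First, I would observe that $q_t \colon \mathbb{R}^2 \times \Sigma \to M_t$ is a covering map, so it is a local isometry and in particular $q_t^*$ commutes with the Hodge Laplacian. Hence $q_t^* \mathcal{H}^\ell(M_t) \subset \mathcal{H}^\ell(\mathbb{R}^2 \times \Sigma)$. Moreover, since $M_t$ is compact, every harmonic form $\omega \in \mathcal{H}^\ell(M_t)$ is bounded and therefore $q_t^*\omega$ has finite $C^0$-norm. Applying \autoref{lem:harmonic_forms} gives
\[
q_t^*\mathcal{H}^\ell(M_t) \subset \bigoplus_{p+q=\ell} \Lambda^p \mathbb{R}^2 \otimes \mathcal{H}^q(\Sigma).
\]
Injectivity of $q_t^*$ is immediate from surjectivity of $q_t$.

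Next, I would identify the image. Forms of the shape $q_t^*\omega$ are by construction invariant under the deck transformation group $\langle t^{-1}\xi_2, t^{-1}\xi_3 \rangle_\mathbb{Z}$, which acts by translation on $\mathbb{R}^2$ and via $\rho(t\,\cdot\,)$ on $\Sigma$. Since translations on $\mathbb{R}^2$ act trivially on constant forms, the invariance of a pure tensor $\eta_1 \otimes \eta_2$ with $\eta_1 \in \Lambda^p \mathbb{R}^2$ constant reduces precisely to invariance of $\eta_2 \in \mathcal{H}^q(\Sigma)$ under the $\rho$-action of $\langle \xi_2, \xi_3 \rangle_\mathbb{Z}$. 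Consequently
\[
q_t^*\mathcal{H}^\ell(M_t) \subset \bigoplus_{p+q=\ell} \Lambda^p \mathbb{R}^2 \otimes \mathcal{H}^q(\Sigma)^{\langle \xi_2,\xi_3\rangle_\mathbb{Z}}.
\]

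For the reverse inclusion, any element $\sum \eta_1 \otimes \eta_2$ of the right-hand side is $\langle t^{-1}\xi_2, t^{-1}\xi_3\rangle_\mathbb{Z}$-invariant and harmonic on $\mathbb{R}^2 \times \Sigma$, hence descends to a harmonic form on $M_t$ whose pullback recovers it. The only subtlety worth double-checking — and arguably the only real content beyond bookkeeping — is that the decomposition from \autoref{lem:harmonic_forms} respects the deck action termwise, which follows because the projections onto $\Lambda^p\mathbb{R}^2 \otimes \mathcal{H}^q(\Sigma)$ are defined intrinsically (by bi-degree in the product structure) and the deck action preserves the product structure. This gives the claimed isomorphism.
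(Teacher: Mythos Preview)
Your sketch is correct and matches the paper's intent: the paper states the corollary as an immediate consequence of \autoref{lem:harmonic_forms} without giving a separate proof, and your argument (pull back to the cover, invoke the lemma, identify the invariant subspace under the deck group) is exactly the straightforward verification the paper is leaving to the reader.
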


The next two lemmas prove \autoref{prop:linear_estimate} for elements which respectively lie inside and orthogonal to the kernel of $\diff_V+\diff_V^*$.

\begin{lemma}\label{lem:estimate_not_kernel}
For every $\omega\in \Omega^\ell(M_t)$
\[ \Vert (1-\hat{\pi}) \omega \VertH{k} \leq c_2 \Vert (\diff + \diff^*)(1-\hat{\pi})\omega \VertH{k-1}\] holds independently of $t$. It even holds on $\mathbb{R}^2 \times \Sigma$.
\end{lemma}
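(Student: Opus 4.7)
The plan is to first prove the estimate on the Riemannian cover $\mathbb{R}^2 \times \Sigma$ (with the product metric) and then transfer it to $M_t$ via the natural covering map $q_t \col \mathbb{R}^2 \times \Sigma \to M_t$, which is a local isometry intertwining both $\hat{\pi}$ and $\diff + \diff^*$. Since pointwise $C^{k,\alpha}$-norms of $\omega \in \Omega^\ell(M_t)$ agree with those of the lift $q_t^*\omega$ on a fundamental domain, any estimate proved on the cover with a universal constant descends to $M_t$ with the same constant, so no $t$-dependence appears.

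On the cover, I would combine two ingredients. The first is a fiberwise $L^2$-estimate: for each $y \in \mathbb{R}^2$, the operator $\diff_V + \diff_V^*$ restricts to $\diff_\Sigma + \diff_\Sigma^*$ (tensored with the constant factor $\Lambda^* T_y^* \mathbb{R}^2$), which is elliptic on $\Sigma$ and admits a positive spectral gap $\lambda_0$ on the $L^2$-orthogonal complement of its kernel. Since $\hat{\pi}\omega = 0$ forces $\res_y \omega \perp \ker(\diff_\Sigma + \diff_\Sigma^*)$ pointwise, and since $(\diff + \diff^*)\omega$ decomposes by bidegree into its vertical part $(\diff_V + \diff_V^*)\omega$ and its horizontal part $(\diff_H + \diff_H^*)\omega$, integrating the spectral-gap estimate over any unit ball $B_1(y_0) \subset \mathbb{R}^2$ yields
$$\Vert \omega \Vert_{L^2(B_1(y_0) \times \Sigma)} \leq \lambda_0^{-1} \Vert (\diff_V + \diff_V^*)\omega \Vert_{L^2(B_1(y_0) \times \Sigma)} \leq \lambda_0^{-1} \Vert (\diff + \diff^*)\omega \Vert_{L^2(B_1(y_0) \times \Sigma)}.$$
The second ingredient is the interior Schauder estimate for the elliptic (Dirac-type) operator $\diff + \diff^*$ applied to the nested pair $B_{1/2}(y_0) \times \Sigma \subset B_1(y_0) \times \Sigma$, namely
$$\Vert \omega \Vert_{C^{k,\alpha}(B_{1/2}(y_0) \times \Sigma)} \leq C \bigl( \Vert (\diff + \diff^*)\omega \Vert_{C^{k-1,\alpha}(B_1(y_0) \times \Sigma)} + \Vert \omega \Vert_{L^2(B_1(y_0) \times \Sigma)} \bigr),$$
with constant $C$ independent of $y_0 \in \mathbb{R}^2$ by the horizontal translation invariance of the product metric. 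Substituting the $L^2$-bound into the Schauder estimate, covering $\mathbb{R}^2 \times \Sigma$ by such half-balls, and passing to the supremum over $y_0$ then gives the claim.

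The main obstacle I anticipate is solely the bookkeeping required to verify that every constant in this argument is independent of both the horizontal base point $y_0$ and the parameter $t$. The former relies on horizontal translation invariance of the product geometry, while the latter is automatic once the estimate is established on the cover. Both ultimately reduce to the fact that all fibers of $p_t$ are isometric copies of the single fixed Riemann surface $(\Sigma, g_\Sigma)$, whose Hodge spectrum furnishes the uniform vertical gap $\lambda_0 > 0$ on the orthogonal complement of $\ker(\diff_\Sigma + \diff_\Sigma^*)$.
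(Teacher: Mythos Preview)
Your reduction to $\mathbb{R}^2\times\Sigma$ via the local isometry $q_t$ and the use of the interior Schauder estimate with translation-invariant constants are both correct and match the paper's set-up. The gap lies in the second inequality of your displayed chain, namely $\|(\diff_V+\diff_V^*)\omega\|_{L^2(B_1\times\Sigma)}\leq\|(\diff+\diff^*)\omega\|_{L^2(B_1\times\Sigma)}$. This does \emph{not} follow from a bidegree decomposition: for $\omega=\sum_p\omega^{p,\ell-p}$ of mixed horizontal degree, the $(a,b)$-component of $\diff\omega$ is $\diff_V\omega^{a,b-1}+\diff_H\omega^{a-1,b}$, so the vertical and horizontal parts of $(\diff+\diff^*)\omega$ land in overlapping bidegree pieces rather than orthogonal ones. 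Writing $D_V\coloneqq\diff_V+\diff_V^*$ and $D_H\coloneqq\diff_H+\diff_H^*$, one can integrate $D_V$ by parts along the closed fibre and use the anticommutation $D_VD_H+D_HD_V=0$ on the flat product to see that $2\langle D_V\omega,D_H\omega\rangle_{L^2(B_1\times\Sigma)}$ reduces to an integral over $\partial B_1\times\Sigma$; this boundary term has no sign, and for suitable $\omega$ (e.g.\ a $(0,\ell)$-piece plus a $(1,\ell-1)$-piece with coefficients of the form $1+\varepsilon y_1$) one obtains $\|D_H\omega\|^2+2\langle D_V\omega,D_H\omega\rangle<0$ for small $\varepsilon>0$, so the claimed inequality fails.

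The paper sidesteps this by running a compactness argument rather than a direct quantitative one: assume the estimate fails along a sequence, translate so that the $C^{k,\alpha}$-mass concentrates near $B_1(0)\times\Sigma$, pass to a $C^{k-1}_{\mathrm{loc}}$-limit by Arzel\`a--Ascoli, and observe that the limit is a bounded harmonic form on $\mathbb{R}^2\times\Sigma$. The Liouville-type \autoref{lem:harmonic_forms} then forces the limit into the image of $\hat\pi$, hence it vanishes (being a limit of forms annihilated by $\hat\pi$), and elliptic bootstrapping contradicts the normalisation. Your fibrewise spectral gap is morally the infinitesimal version of this Liouville statement, but turning it into a local $L^2$-estimate against $(\diff+\diff^*)\omega$ on a domain with boundary would require an additional absorption or iteration step that you have not supplied.
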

\begin{proof}
We prove the estimate on $\mathbb{R}^2 \times \Sigma$. Since the quotient maps are isometries, this implies the lemma. 

Suppose the estimate does not hold on $\mathbb{R}^2\times\Sigma$ to produce a contradiction. Then we find a sequence $(\omega_n)_{n\in \mathbb{N}}\subset \Omega^\ell(\mathbb{R}^2\times \Sigma)$ with \[\Vert (1-\hat{\pi}) \omega_n \VertH{k} = 1\quad \textup{and} \quad\Vert (\diff + \diff^*) (1-\hat{\pi})\omega_n \VertH{k-1} \to 0.\] Since both expressions are invariant under translations, we can assume that 
\begin{align}\label{eq:lbound_estimate_not_kernel}
\Vert (1-\hat{\pi}) \omega_n \Vert_{C^{k,\alpha}(B_1(0)\times \Sigma)} \geq \frac{1}{4(k+1)} \qquad \textup{for every $n \in \mathbb{N}$.}
\end{align}

By the Arzelà-Ascoli Theorem we find a subsequence (again denoted by $(\omega_n)_{n \in \mathbb{N}}$) such that $((1-\hat{\pi})\omega_n)_{n \in \mathbb{N}}$ converges in $C^{k-1}_{\textup{loc}}$ to $\omega_\infty\in C^{k-1}\Omega^\ell (\mathbb{R}^2 \times \Sigma)$. This limit satisfies $\diff \omega_\infty + \diff^* \omega_\infty = 0$ (for $k=1$ in the distributional sense) and is therefore smooth by elliptic regularity. As $\Vert \omega_\infty \VertC{k-1} \leq1$, \autoref{lem:harmonic_forms} implies that $\omega_\infty$ is a sum of terms of the form $\eta_1 \otimes \eta_2$ where $\eta_1 \in \Omega^p(\mathbb{R}^2)$ is parallel and $\eta_2\in \Omega^q(\Sigma)$ is harmonic. Therefore, $\omega_\infty = \hat{\pi} \omega_\infty$ and since $\hat{\pi}(1-\hat{\pi})\omega_n = 0$ for every $n\in \mathbb{N}$, we obtain further $\omega_\infty=\hat{\pi}\omega_\infty = 0$. However, bootstrapping improves the convergence inside $B_1(0)\times \Sigma$ to $C^{k,\alpha}$ and therefore $\Vert \omega_\infty \Vert_{C^{k,\alpha}(B_1(0)\times \Sigma)} \geq 1/(4(k+1))$ holds by~\eqref{eq:lbound_estimate_not_kernel}. This gives the sought contradiction.
\end{proof}

\begin{lemma}\label{lem:estimate_kernel}
For every $t \in \mathbb{R}^+$ and $\omega \in \Omega^\ell(M_t)$ we have
\[ \Vert \hat{\pi} \omega \VertC{0} \leq c_3 t^{-1} \Vert (\diff + \diff^*) \hat{\pi} \omega \VertC{0} + \Vert \Pi \omega \VertC{0} \]
where $c_3$ is independent of $t$.
\end{lemma}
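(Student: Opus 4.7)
The plan is to reduce the estimate on $M_t$ to a standard Fredholm-type inequality on the compact, fixed manifold $T^2_1$ via a rescaling argument that turns the torus $T^2_t$ of diameter $\sim t^{-1}$ into the unit torus. The factor $t^{-1}$ on the right-hand side will emerge from the asymmetric behaviour of $\diff$ and $\diff^*$ under this conformal rescaling of the base metric.

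First, on $\hat\pi\Omega^\ell(M_t)$ the vertical exterior derivative vanishes, so $(\diff+\diff^*)\hat\pi\omega=(\diff_H+\diff_H^*)\hat\pi\omega$; only horizontal derivatives survive. Under the bigrading, each component $\hat\pi\Omega^{p,q}(M_t)$ is naturally identified with $\Omega^p(T^2_t,E^q)$, where $E^q\to T^2_t$ is the flat bundle with fibre $\mathcal{H}^q(\Sigma)$ and monodromy induced by $\rho$. Under this identification $\diff_H+\diff_H^*$ becomes the twisted de~Rham operator, whose kernel is spanned by the parallel sections valued in $\mathcal{H}^q(\Sigma)^{\langle\xi_2,\xi_3\rangle_\mathbb{Z}}$; by \autoref{cor:harmonic_forms_M_t} these are precisely the pullbacks of the harmonic forms on $M_t$, so the harmonic projection $\pi_H$ on $T^2_1$ corresponds under the scaling to $\Pi$ on $M_t$.

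Second, on the fixed compact manifold $(T^2_1,g_1)$ the standard Fredholm estimate
\[
\|\beta\|_{C^0}\leq C_1\bigl(\|(\diff+\diff^*)\beta\|_{C^0}+\|\pi_H\beta\|_{C^0}\bigr),\qquad \beta\in\Omega^p(T^2_1,E^q),
\]
holds. Its proof is a compactness argument parallel to \autoref{lem:estimate_not_kernel}: were the estimate to fail, a subsequential $C^{k,\alpha}$-limit (extracted via Arzelà--Ascoli plus elliptic regularity) would yield a nonzero twisted-harmonic form with vanishing harmonic projection, which is absurd. I then transfer the estimate to $T^2_t$ via the isometry $\phi_t\colon(T^2_1,t^{-2}g_1)\to(T^2_t,g_t)$ given by $\tilde y\mapsto t^{-1}\tilde y$. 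For $\alpha\in\Omega^p(T^2_t,E^q)$ the pointwise norms scale as $|\phi_t^*\alpha|_{g_1}=t^{-p}|\alpha|_{g_t}$; the conformal formula $\diff^*_{t^{-2}g_1}=t^2\diff^*_{g_1}$ yields $\|(\diff+\diff^*)_{g_1}\phi_t^*\alpha\|_{C^0(g_1)}\leq Ct^{-(p+1)}\|(\diff+\diff^*)_{g_t}\alpha\|_{C^0(g_t)}$; and since $\pi_H$ is the same operator for $g_1$ and $t^{-2}g_1$ (the scaling factors in the $L^2$ inner product and the $L^2$ norms of the harmonic basis cancel exactly in dimension two), one obtains $\|\pi_H\phi_t^*\alpha\|_{C^0(g_1)}=t^{-p}\|\Pi\alpha\|_{C^0(g_t)}$. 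Plugging these into the $T^2_1$-estimate and multiplying through by $t^p$ gives the claim on each bigrading component; summing over $p+q=\ell$ and using $\Pi\hat\pi\omega=\Pi\omega$ (a consequence of the $L^2$-orthogonality of $\hat\pi$ together with $\mathcal{H}^\ell(M_t)\subset\hat\pi\Omega^\ell(M_t)$) completes the proof.

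The main technical point is the asymmetric bookkeeping of $\diff$ and $\diff^*$ under the conformal rescaling: while $\diff$ is metric-free, $\diff^*$ picks up an additional factor of $t^2$ from its metric dependence. The net effect is the $t^{-1}$ prefactor in the final estimate, and this asymmetry is exactly what prevents a single uniform Fredholm estimate on $(T^2_1,g_1)$ from being applicable to $(T^2_t,g_t)$ without the rescaling step.
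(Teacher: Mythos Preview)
Your proof is correct and follows the same strategy as the paper: establish the Fredholm estimate at $t=1$ and then transport it to general $t$ by rescaling the torus direction, tracking how $\diff$, $\diff^*$, and the harmonic projection behave under this scaling. The only difference is in packaging: you first identify $\hat\pi\Omega^{p,q}(M_t)$ with $\Omega^p(T^2_t,E^q)$ and argue bidegree by bidegree on the $2$--dimensional base, whereas the paper works directly on $M_t$ and handles all bidegrees at once via the weighted pullback $\sum_{p+q=\ell} t^{p}\Phi_t^*\omega^{p,q}$, which encodes the same componentwise scaling in a single formula.
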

\begin{proof}
We first prove the estimate for $t=1$ and then for arbitrary $t$ by scaling.

\noindent \textbf{The estimate for $t=1$} follows from Morrey's inequality and Fredholm theory.

\noindent \textbf{The estimate for general $t \in \mathbb{R}^+$:} Denote by $\Phi_t \col M_1 \to M_t$ the map $[(y,z)] \mapsto [(t^{-1}y,z)]$. One can check that for any $\omega \in \Omega^\ell(M_t)$ we have 
\begin{align*}
\vert\Phi_t^*\omega \vert_{g_1} &= \sum_{p+q=\ell} t^{-p} \big(\vert \omega^{p,q} \vert_{g_t} \circ \Phi_t\big),\\
 (\diff + \diff^*_1) \Phi_t^* \omega  &=  \Phi^*_t \big(\diff \omega + t^{-2} \diff^*_{H_t} \omega + \diff^*_{V_t} \omega \big),  \\
\Pi_1 \Phi_t^* &= \Phi_t^* \Pi_t,
\end{align*}
where $\omega^{p,q}$ denotes the projection onto $\Lambda^p H^* \otimes \Lambda^q V^*$ and where the last equality uses \autoref{cor:harmonic_forms_M_t}. The following estimate uses $\Abs{\cdot}_{C^0_t}$ and $\Abs{\cdot}_{C^0_1}$ to denote the $C^0$-norms with respect to the metrics $g_t$ and $g_1$.
Since the decomposition $\Lambda^\ell T^*M_t = \oplus_{p+q=\ell} \Lambda^p H^* \otimes \Lambda^q V^*$ is orthogonal, the previous step implies that for any $\omega \in \textup{im} \hat{\pi}$:
\begin{align*}
\Vert \omega \Vert_{C^{0}_t} &=  \Abs[\Big]{\sum_{p+q=\ell} \omega^{p,q} }_{C^{0}_t} =  \Abs[\Big]{ \sum_{p+q=\ell} t^p\Phi_t^* \omega^{p,q} }_{C^{0}_1} \\
& \leq c_3  \big( \Abs[\Big]{ (\diff + \diff^*_1) \sum_{p+q=\ell} t^p\Phi^*_t \omega^{p,q} }_{C^{0}_1} + \Abs[\Big]{ \sum_{p+q=\ell} t^p \Pi_1 \Phi_t^*\omega^{p,q} }_{C^{0}_1} \big) \\
& = c_3 \big(  \Abs[\Big]{  \sum_{p+q=\ell} t^{p} \Phi^*_t \diff_{H_t} \omega^{p,q} }_{C^{0}_1} + \Abs[\Big]{  \sum_{p+q=\ell} t^{p-2} \Phi^*_t \diff^*_{H_t} \omega^{p,q} }_{C^{0}_1}  + \Abs[\Big]{ \Pi_t \omega }_{C^{0}_t} \big)\\
& = c_3 \big(  \Abs[\Big]{  \sum_{p+q=\ell} t^{-1} \diff_{H_t} \omega^{p,q} }_{C^{0}_t} + \Abs[\Big]{  \sum_{p+q=\ell} t^{-1}  \diff^*_{H_t} \omega^{p,q} }_{C^{0}_t}  + \Abs[\Big]{ \Pi_t \omega }_{C^{0}_t} \big) \\
&= c_3 \big( t^{-1} \Vert (\diff +\diff^*)\omega \Vert_{C^0_t} + \Vert \Pi_t \omega \Vert_{C^{0}_t} \big). \qedhere
\end{align*}
\end{proof}

\begin{proof}[Proof of \autoref{prop:linear_estimate}]
The Schauder estimate \[\Vert \omega \VertH{k} \leq c_4  \big( \Vert (\diff +\diff^*) \omega \VertH{k-1} + \Vert \omega \VertC{0} \big) \] and Lemmas~\ref{lem:estimate_not_kernel} and~\ref{lem:estimate_kernel} imply 
\begin{align*}
\Vert \omega \VertH{k} \leq c_5 (1+t^{-1}) \big( \Vert (\diff + \diff^*) \omega \VertH{k-1} + \Vert (\diff + \diff^*) \hat{\pi} \omega \VertH{k-1} + \Vert \Pi \omega \VertC{0} \big).
\end{align*}
The observation $(\diff + \diff^*)\hat{\pi} = \hat{\pi} (\diff + \diff^*)$ finishes the proof.
\end{proof}

\section{Examples}\label{Sec:Examples}
Let $(Y_0,\phi_0)$ be a flat $\G_2$-orbifold together with a chosen set of resolution data. Denote by $\tilde{\phi}_t$ the closed $\G_2$-structure from \autoref{def:OrbifoldResolution} on the resolution $\hat{Y}_t$ and by $\phi_t$ the torsion-free $\G_2$-structure of \autoref{theo:Platt}. 
\begin{assumption}\label{ass:coassociative_data}
Assume that for some element $\big((\hat{X}_S,\hat{\omega}_S,\tau_S),(\rho_S \col G_S \to \textup{Isom}(\hat{X}_S))\big)$ of the resolution data we have 
\begin{enumerate}
\item An embedded closed surface $\iota_\Sigma \col \Sigma \to \hat{X}_S$ which is holomorphic with respect to $I_{\hat{\xi}_1} = \langle \underline{I},\hat{\xi}_1 \rangle$ for $\hat{\xi}_1 \in S^2 \subset \im \mathbb{H}$.
\item Two linearly independent $\xi_2,\xi_3 \in \big\{\hat{\xi}_1\big\}^\perp \cap \Lambda_S \subset \Im \mathbb{H}$ such that $\rho(\xi_2)(\Sigma)= \Sigma =\rho(\xi_3)(\Sigma)$. Here $\Lambda_S$ is the lattice $G_S \cap \im \mathbb{H}< \im \mathbb{H}$.
\end{enumerate}
\end{assumption}
\begin{proposition}~\label{prop:coassociatives_examples}
For every triple $(\Sigma,\xi_2,\xi_3)$ as in \autoref{ass:coassociative_data} there exists a $T>0$ such that for every choice of basepoint $q\in \im \mathbb{H}$ there is an immersed $\phi_t$-coassociative submanifold $\iota_t \col M \to \hat{Y}_t$ for $t \in(0,T)$. As $t$ approaches $0$, the induced volume on $M$ shrinks to $0$. Furthermore, if $(G_S,\rho_S)$, $(\Sigma,\xi_2,\xi_3)$, and $q$ satisfy \autoref{bul:cond_embedding_1} and~\ref{bul:cond_embedding_2} listed at the end of \autoref{ex:coassociative_model}, then there exists a free group action of $H_2 < G_S/\Lambda_S$ (as specified in \autoref{ex:coassociative_model}) such that $\iota_t$ descends to an embedding of $M/H_2$ once $t$ is sufficiently small.
\end{proposition}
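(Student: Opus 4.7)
The plan is to apply \autoref{theo:perturbing_Coassociatives} with $\phi_0 \coloneqq t^{-3}\tilde\phi_t$ and $\phi \coloneqq t^{-3}\phi_t$, working throughout on $\hat{Y}_t$ in the rescaled ambient metric $t^{-2}\tilde{g}_t$ so that the ALE piece $\hat{X}_S$ inside $\hat{V}_S$ carries its unit-scale hyperkähler metric $\hat{g}_S$ and all constants become $t$-independent. First I produce an exact $\tilde{\phi}_t$-coassociative representative of $M$: for $t$ small enough that $\iota_\Sigma(\Sigma) \subset \hat{K}_S$, the immersion lies entirely inside $\hat{V}_S$, and by \autoref{def:OrbifoldResolution} the structure $\tilde{\phi}_t$ equals there the model form~\eqref{eq:modelG2Structure} built from $t^2\underline{\hat{\omega}}_S$. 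Since the coassociativity in \autoref{ex:coassociative_model} depends only on the complex structure $I_{\hat{\xi}_1}$ (invariant under scaling the hyperhermitian form) and on the translations $\xi_2,\xi_3 \in \Lambda_S$ preserving $\Sigma$, the immersion $\iota_q$ of \autoref{ex:coassociative_model} is still $\tilde{\phi}_t$-coassociative. In the rescaled metric its domain is exactly the fibration $M_t = (\mathbb{R}^2 \times \Sigma)/\langle t^{-1}\xi_2,t^{-1}\xi_3\rangle_\mathbb{Z}$ treated in the preceding subsection.

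Next I verify the five hypotheses of \autoref{theo:perturbing_Coassociatives} for the Levi--Civita tubular neighborhood $\mathtt{J}$ of $\iota$ with respect to $t^{-2}\tilde{g}_t$. The uniform radius $B_R(0)\subset U$ is available because, in the rescaled metric, the ambient geometry on $\hat{V}_S$ and the second fundamental form of $M$ are $t$-independent (the sole $t$-dependent feature is the long totally-geodesic flat direction of $T^2_t$). The cohomological condition $\iota^*[\phi]=0$ follows from $\iota^*\tilde{\phi}_t=0$ pointwise together with the fact that $\phi_t-\tilde{\phi}_t$ is exact on $\hat{Y}_t$ by the Joyce--Platt construction. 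The closeness bound $\Vert \mathtt{J}^*(\phi-\phi_0)\Vert_{C^{0,\alpha}}\le ct^{5/2}$ is precisely \autoref{theo:Platt}, so $\beta=5/2$. For the linear estimate, \autoref{prop:linear_estimate} gives, for $\omega\in (\mathcal{H}^2_+)^\perp\cap \Omega^2_+(M)$, the vanishing $\Pi_t\omega=0$; self-duality implies $|d^*\omega|=|{*d\omega}|=|d\omega|$ pointwise, so $\Vert d\omega+d^*\omega\Vert_{C^{0,\alpha}}\le c \Vert d\omega\Vert_{C^{0,\alpha}}$ and therefore $\Vert \omega \Vert_{C^{1,\alpha}} \le c(1+t^{-1}) \Vert d\omega\Vert_{C^{0,\alpha}}$, i.e.\ $\gamma=1$. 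The quadratic estimate on $\mathcal{N}_\mathtt{J}$ is \autoref{prop:linearisation}.

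Since $\beta=5/2>2=2\gamma$, \autoref{theo:perturbing_Coassociatives} now supplies, for all $t\in(0,T)$, a section $\omega$ with $\Vert \omega \Vert_{C^{1,\alpha}}\le c_v t^{3/2}$ such that $\iota_t\coloneqq \mathtt{J}_\omega\col M \to \hat{Y}_t$ is $\phi_t$-coassociative. For the volume claim, the physical $4$-volume equals $t^4$ times the rescaled one, and the latter equals $\textup{Vol}(T^2_t,g_{\mathbb{R}^2})\cdot\textup{Vol}(\Sigma,\hat{g}_S)=O(t^{-2})$, so the physical volume is $O(t^2)\to 0$. Finally, under conditions~\autoref{bul:cond_embedding_1} and~\autoref{bul:cond_embedding_2} the map $\iota$ is already an embedding, so \autoref{rem:EmbCoa} yields that $\iota_t$ is an embedding for $t$ sufficiently small.

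The hard part will be the careful bookkeeping of metrics and scales: ensuring that the norm in \autoref{theo:Platt}, the quadratic estimate of \autoref{prop:linearisation}, and the linear estimate of \autoref{prop:linear_estimate} are all phrased with respect to the same rescaled metric on $\hat{Y}_t$ (respectively on $M$ after pulling back by $\mathtt{J}$), and verifying that the linear estimate on the self-dual part genuinely exhibits the loss $t^{-1}$ rather than something worse, so that $\beta-\gamma=3/2>0$ and the contraction closes.
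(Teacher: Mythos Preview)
Your proposal is correct and follows essentially the same approach as the paper: apply \autoref{theo:perturbing_Coassociatives} to the model coassociative of \autoref{ex:coassociative_model} inside $\hat{V}_S$, working in the rescaled metric $t^{-2}\tilde{g}_t$ with $\beta=5/2$ (from \autoref{theo:Platt}) and $\gamma=1$ (from \autoref{prop:linear_estimate}), and then invoke \autoref{rem:EmbCoa} for the embedding claim. You are in fact slightly more explicit than the paper in a few places---the cohomological condition via exactness of $\phi_t-\tilde{\phi}_t$, the passage from $\diff+\diff^*$ to $\diff$ via $\ast\omega=\omega$, and the $O(t^2)$ volume computation---while the paper is marginally more careful about the $t$-independence of the quadratic-estimate constant (it points to \autoref{lem:secDerEstimate} rather than just \autoref{prop:linearisation}).
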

\begin{proof}
Throughout the proof we work with the rescaled $\G_2$-structures $t^{-3} \tilde{\phi}_t$ and $t^{-3} \phi_t$. \autoref{ex:coassociative_model} gives rise to an immersed $(t^{-3}\tilde{\phi}_t)$-coassociative submanifold $\widetilde{\iota}_t \col M \to \hat{Y}_t$. Let $\mathtt{J} \col U \to \hat{Y}_t$ be its tubular neighbourhood induced by the Levi--Civita connection associated to $t^{-2}\tilde{g}_t$. Without loss of generality we may assume that $\mathtt{J}(U) \subset \hat{V}$ where $\hat{V}$ is as in \autoref{def:OrbifoldResolution}.

The compactness of $M$, \autoref{theo:Platt}, and \autoref{prop:linear_estimate} imply that there exist $t$- and $q$-independent constants $c>0$, $R>0$, $\beta\coloneqq 5/2$, and $\gamma \coloneqq 1$ such that the first three assumptions in \autoref{theo:perturbing_Coassociatives} are satisfied. (All $C^{k,\alpha}$-norms are hereby taken with respect to $t^{-2} \tilde{g}_t$ and we tacitly assume $\alpha<1/32$.) Furthermore, one can check that in our set-up the estimates in \autoref{lem:secDerEstimate} are independent of $t$. Thus, by enlarging $c$ if necessary we may assume that the fourth assumption is also satisfied. We therefore, obtain a $(t^{-3}\phi_t)$-coassociative submanifold (or analogously, a $\phi_t$-coassociative submanifold) $\iota_t \col M \to \hat{Y}_t$ contained in $\hat{V}$ that satisfies \[ \Vert \iota_t - \widetilde{\iota}_t \Vert_{C^{2,\alpha}_{t^{-2}\tilde{g}_t}} \leq c_v t^{3/2} \qquad \text{and} \qquad \Vert \iota_t - \widetilde{\iota}_t \Vert_{C^{2,\alpha}_{\tilde{g}_t}} \leq c_v t^{1/2-\alpha}.  \] Direct inspection reveals that with respect to the family of metrics $\tilde{g}_t$ (and therefore also with respect to $g_t$) the fibers of $M$ collapse to points as $t$ tends to $0$. 

If $(G_S,\rho_S)$, $(\Sigma,\xi_2,\xi_3)$, and $q$ satisfy the conditions given in \autoref{bul:cond_embedding_1} and~\ref{bul:cond_embedding_2}, then there exists a free $H_2$-action on $M$ under which $\widetilde{\iota_t}$ and the tubular neighbourhood chosen above are invariant (cf. \autoref{ex:coassociative_model}). By the uniqueness of the section in \autoref{theo:perturbing_Coassociatives}, $\iota_t$ is also invariant under $H_2$ and descends therefore to $\overline{\iota_t} \col M/H_2 \to \hat{Y}_t$. This is just a perturbation of the embedding $\overline{\widetilde{\iota_t}} \col M/H_2 \to \hat{Y}_t$ (cf. \autoref{ex:coassociative_model}) and therefore also an embedding once $t$ is sufficiently small.
\end{proof}
\begin{remark}\label{rem:coassfam}
The previous proposition produces coassociative submanifolds by perturbing the model-immersion from \autoref{ex:coassociative_model}. This requires the choice of a basepoint $q\in \im \mathbb{H}$. By varying this basepoint \autoref{prop:coassociatives_examples} produces a (up to reparametrization) $1$-dimensional family of coassociative immersions.\footnote{To make this statement rigorous one could use a family version of \autoref{theo:perturbing_Coassociatives} as in~\cite[Proposition~3.19]{DPSDTW-Associatives}.} Since $b^2_+(M)=1$ (as $b_1(\Sigma)=0$ for all immersed (holomorphic) Riemann surfaces in ALE hyperkähler 4-manifolds), all coassociative deformations of $\iota_t \col M \to \hat{Y}_t$ are obtained this way (cf. \autoref{rem:modelcoassoc_families}).
\end{remark}

\begin{example}
Joyce~\cite[Examples~7-14]{Joyce-GeneralisedKummer2} constructs seven examples of flat $\G_2$-orbifolds whose respective singular strata are all given by tori: $S=T^3$. More precisely, neighbourhoods of the singularities in all these orbifolds are described by \autoref{ex_G2Model} with $X = \mathbb{H}/\Gamma_S$ for $\Gamma_S = C_2$. The corresponding Bieberbach groups are given by lattices $\G_S = \Lambda_S \cong \mathbb{Z}^3$ (whose exact form are irrelevant for our purpose) and the group actions $\rho_S \col G_S \to \textup{Isom}(\mathbb{H}/\Gamma_S)$ are trivial.

All these singularities can be resolved via Gibbons--Hawking spaces. This requires a choice of $\zeta \in \Delta^\circ$ (cf. \autoref{sub:GibbonsHawking}). To simply find \ita{some} resolution data, any such choice suffices.

In order to construct coassociative submanifolds, we pick for every singular stratum $S$ two elements $\xi_2,\xi_3 \in \Lambda_S$ which generate $\Lambda_S \cap (\mathbb{R}\xi_2 + \mathbb{R}\xi_3)$. Furthermore, we choose $\zeta \coloneqq [-\zeta_1,\zeta_1]\in \Delta^\circ $ with $0\neq\zeta_1 \in \{\xi_2,\xi_3\}^\perp$. The corresponding Gibbons--Hawking space $X_\zeta$ contains a holomorphic sphere $\Sigma$ such that $(\Sigma,\xi_2,\xi_3)$ and any choice of $q \in \im \mathbb{H}$ satisfy the conditions of \autoref{prop:coassociatives_examples}. Furthermore, \autoref{bul:cond_embedding_1} and~\ref{bul:cond_embedding_2} of \autoref{ex:coassociative_model} are satisfied with $H_2=\{1\}$ (as $G/\Lambda = \{1\}$). We therefore obtain embedded coassociative submanifolds in all the critical loci. Pick now any $\xi_1\in \Lambda_S$ such that $\{\xi_1,\xi_2,\xi_3\}$ generate $\Lambda_S$. The coassociative submanifold associated to $(\Sigma,\xi_2,\xi_3)$ and the basepoint $q$ equals (up to reparametrisation) the coassociative submanifold associated to $(\Sigma,\xi_2,\xi_3)$ and the basepoint $q + \xi_1$. The families of coassociative submanifolds constructed in this example are therefore parametrised by  $S^1$ (cf.~\autoref{rem:coassfam}).
\end{example}

\begin{remark}
Joyce~\cite[Examples~3, 4, 5, 6, 15, 16]{Joyce-GeneralisedKummer2} constructs further examples of flat $\G_2$-orbifolds whose transverse singularities are modelled upon $\mathbb{H}/\Gamma_S$ for $\Gamma_S \in \{C_2,C_3\}$. \cite[Examples~4.3,~4.4,~4.9]{DPSDTW-Associatives} describes possible choices for the resolution data and points out holomorphic spheres inside the corresponding Gibbons--Hawking spaces. It is not difficult to check that every singular stratum of these orbifolds admits at least one choice of resolution data such that \autoref{prop:coassociatives_examples} gives rise to an embedded coassociative submanifold in the resolution.
\end{remark}

The following examples all treat $\G_2$-orbifolds constructed in~\cite[Section~5.4.3]{Reidegeld}. A neighbourhood of the singular strata in all these orbifolds can be described by \autoref{ex_G2Model} together with the data in \autoref{fig:Singular1}. For this we list:
\begin{itemize}
\item The diffeomorphism type of the singular strata $S$.
\item The orbifold group $\Gamma_S$ such that the transverse singularity is modelled upon $X_S \coloneqq\mathbb{H}/\Gamma_S$.
\item The generators of the Bieberbach group $G_S$ as follows: Every $G_S$ is generated by the lattice $\Lambda_S = \langle i,j,k \rangle \subset \im \mathbb{H}$. Furthermore, we indicate whether the following two additional generators appear (\cmark = appears, \xmark = does not appear): \[ \left(R_+, \frac{i+k}{2}\right) \qquad \text{and} \qquad \left(R_-,\frac{j}{2}\right)\] where $R_\pm \in \GL(\Lambda_S)\cong \GL_3(\mathbb{Z})$ are given by
\begin{align}\label{eq:R+-}
R_\pm \coloneqq
\begin{pmatrix} 
\pm 1 & 0 & 0 \\
0 & \mp 1 & 0 \\
0 & 0 & -1
\end{pmatrix}.
\end{align}
\item The action $\rho_S \col G_S \to \textup{Isom}(\mathbb{H}/\Gamma_S)$ of the generators of $G_S$ as follows: The lattice $\Lambda_S$ acts trivially in all examples. Furthermore, $(R_+,\frac{i+k}{2})$ and $(R_-,\frac{j}{2})$ act (whenever they appear as generators) via $\rho_S(R_+,\frac{i+k}{2})[q] = [iqi]$ and $\rho_S(R_-,\frac{j}{2})[q] = [jqj]$ for $[q]\in \mathbb{H}/\Gamma_S$, respectively.
\end{itemize}

\begin{table}
\begin{center}
{\renewcommand{\arraystretch}{1.4}
\begin{tabular}{ | c || c | c | c | c |}
\cline{4-5}
\multicolumn{1}{c}{} & \multicolumn{1}{c}{} & \multicolumn{1}{c}{} &  \multicolumn{2}{|c|}{$G_S$ and $\rho_S \col G_S \to \textup{Isom}(\mathbb{H}/\Gamma_S)$} \\
\cline{1-5}
\multicolumn{1}{| c ||}{\parbox[0pt][3em][c]{0cm}{} $\#$ } &$S$ & $\Gamma_S$  & \makecell{$(R_+,\frac{i+k}{2})$ with \\  $\rho_S(R_+,\frac{i+k}{2})[q] = [iqi]$} & \makecell{$(R_-,\frac{j}{2})$ with \\ $\rho_S(R_-,\frac{j}{2})[q] = [jqj]$}  \\
\cline{1-5} \hline 
$1.$ & $T^3/C_2^2$ & $C_2$ & \cmark & \cmark  \\
$2.$ & $T^3/C_2$ & $C_3$ & \xmark & \cmark  \\
$3.$ & $T^3/C_2^2$ & $C_3$ & \cmark & \cmark  \\
$4.$ & $T^3/C_2^2$ & $C_4$ & \cmark & \cmark  \\
$5.$ & $T^3/C_2^2$ & $C_6$ & \cmark & \cmark  \\ 
$6.$ & $T^3/C_2^2$ & $\textup{Dic}_3$ & \cmark & \cmark  \\
\hline
\end{tabular}
}
\end{center}
\caption{Description of those singular strata appearing in~\cite[Section 5.3.4]{Reidegeld} which were not treated in~\cite[Section~4]{DPSDTW-Associatives}. For each stratum the Bieberbach group $G_S$ is generated by $\Lambda_S =\langle i,j,k \rangle \subset \im \mathbb{H}$. Whether the two additional generators  $(R_+,\frac{i+k}{2})$ and $(R_-,\frac{j}{2})$ (for $R_{\pm}$ as in~\eqref{eq:R+-}) appear is indicated (\cmark = appears, \xmark = does not appear). The homomorphism $\rho_S \col G_S \to \textup{Isom}(\mathbb{H}/\Gamma_S)$ maps $\Lambda_S$ to $\textup{Id}$ and the other generators as indicated. A neighbourhood of any singular stratum is then described by \autoref{ex_G2Model} together with the respective data of this table.}\label{fig:Singular1}
\end{table}

\begin{example}\label{ex:Reidegeld_descripition_qdependence}
Reidegeld~\cite[Section~5.3.4]{Reidegeld} constructs an example of a flat $\G_2$-orbifold whose singular strata split into two types. Both types can be described via \autoref{ex_G2Model} together with the data of rows~$2$ and~$3$ in \autoref{fig:Singular1}, respectively. 

These singularities can be resolved by Gibbons--Hawking spaces. This requires a choice of parameter $\zeta\in \Delta^\circ$ (cf. \autoref{sec:ALE}). All parameters such that the $G_S$-action lifts to the Gibbons--Hawking space $X_\zeta$ can be found in \autoref{sec:ReidegeldResolution}.

The parameter $\zeta \coloneqq [-i,0,i]$ works for both types of singularities. The corresponding Gibbons--Hawking space contains two $I_i$-holomorphic spheres which together with $\xi_2 \coloneqq j$, $\xi_3 \coloneqq k$, and any choice of $q \in \im \mathbb{H}$ satisfy the conditions of \autoref{prop:coassociatives_examples}. Thus, the resolution admits coassociative submanifolds in all the critical regions. 

The conditions stated in \autoref{bul:cond_embedding_1} and~\ref{bul:cond_embedding_2} of \autoref{ex:coassociative_model} are satisfied by the above choices. However, the group $H_2$ in \autoref{bul:cond_embedding_2} depends on the value of the basepoint $q$ which we set (without loss of generality) as $q \coloneqq si$ with $s \in \mathbb{R}$ in the following. For the resolution of the singularities described by row~2 we then have $H_2= C_2$ if and only if $s \in \frac{1}{2}\mathbb{Z}$ and for all other values $H_2=\{1\}$. Note further that the coassociatives for $q=si$,$q=(s+1)i$, and $q=-si$ all agree (up to reparametrisation) in $\hat{Y}_t$. The family of coassociative submanifolds that we obtain by varying $q$ (cf. \autoref{rem:coassfam}) is therefore parametrised by the interval $[0,1/2]$ (more precisely, by $S^1/\mathbb{Z}_2$ where $\mathbb{Z}_2$ acts via reflection). For all inner points $s\in (0,1/2)$ the corresponding submanifolds are embedded and for $s\in \{0,1/2\}$ they factor through double cover over embedded rigid coassociatives.  

Similarly, the coassociatives inside the resolution of the singularities described by row~$3$ are embedded for $q \coloneqq si$ with $s \notin \frac{1}{4}\mathbb{Z}$ and factor through a double cover over an embedded submanifold for these critical values. Furthermore, coassociatives for $q=si,q=(s+1/2)i$, and $q=-si$ are (up to reparametrisation) identified in $\hat{Y}_t$. As before, we therefore obtain that the deformation family is given by the interval $[0,1/4]$ with embedded coassociatives for $s \in (0,1/4)$ and double covers for $s \in \{0,1/4\}$.
\end{example}

\begin{example}
Reidegeld~\cite[Section~5.3.4]{Reidegeld} constructs an example of a flat $\G_2$-orbifold whose singular strata split into two types. Both types can be described via \autoref{ex_G2Model} together with the data of rows~$1$ and~$4$ in \autoref{fig:Singular1}, respectively. 

We choose a set of resolution data by a collection of certain Gibbons--Hawking spaces. This requires choices of the parameter $\zeta\in \Delta^\circ$ (cf. \autoref{sec:ALE}). All parameters such that the $G_S$-action lifts to the Gibbons--Hawking space $X_\zeta$ can be found in \autoref{sec:ReidegeldResolution}.

As an example, we pick the following:
\begin{enumerate}
\item[$1.$] $\zeta =[-i,i]$ for strata of type described by row~$1$. The associated Gibbons--Hawking space contains one $I_i$-holomorphic sphere which together with $\xi_2 \coloneqq j$,$\xi_3 \coloneqq k$, and any basepoint $q\in \im \mathbb{H}$ satisfies the conditions of \autoref{prop:coassociatives_examples}. As in \autoref{ex:Reidegeld_descripition_qdependence}, these are embedded for generic choices of $q\in \im \mathbb{H}$ and otherwise factor through a double-cover over an embedded rigid coassociative.
\item[$4.$] $\zeta =[-2i,-i,i,2i]$ for strata of type described by row~$4$. The associated Gibbons--Hawking space contains $3$ $I_i$-holomorphic spheres which together with $\xi_2 \coloneqq j$, $\xi_3 \coloneqq k$, and any $q\in \im \mathbb{H}$ satisfy the conditions of \autoref{prop:coassociatives_examples}. The resulting submanifolds are again embedded for generic $q$ and factor otherwise through double-cover over embedded coassociatives.
\end{enumerate}
\end{example}

\begin{example}
Reidegeld~\cite[Section~5.3.4]{Reidegeld} constructs an example of a flat $\G_2$-orbifold whose singular strata split into four types. All types can be described via \autoref{ex_G2Model} together with the data of rows~$1$, $2$, $3$, and~$5$ in \autoref{fig:Singular1}, respectively. 

All singularities can be resolved by certain Gibbons--Hawking spaces. This requires choices of the parameter $\zeta\in \Delta^\circ$ (cf. \autoref{sec:ALE}). All parameters such that the $G_S$-action lifts to the Gibbons--Hawking space $X_\zeta$ can be found in \autoref{sec:ReidegeldResolution}.

The singular strata described by rows $1$-$3$ have been treated in the previous examples. For the strata of type $5$ we may exemplary pick $\zeta \coloneqq [-3i,-2i,-i,i,2i,3i]$. The corresponding Gibbons--Hawking space contains five $I_i$-holomorphic spheres. Each one of these together with $\xi_2 \coloneqq j,\xi_3 \coloneqq k$, and any $q\in \im \mathbb{H}$ satisfies the conditions of \autoref{prop:coassociatives_examples}. As in \autoref{ex:Reidegeld_descripition_qdependence}, these are generically embedded and factor otherwise through double-cover over embedded coassociatives.
\end{example}

The following example discusses an orbifold where some of the singularities are described by row 6 of \autoref{fig:Singular1}. A resolution thereof requires Kronheimer's construction of ALE spaces (as described in \autoref{Sec:KronheimerALE}). 

Recall from \autoref{rem:ADE} that the group $\textup{Dic}_3$ corresponds to the root system $D_5$. This root system is given by (cf.~\cite[Chapter~VI.4.8]{Bourbaki-Lie4-6}) \[ \Phi = \{\pm e_i \pm e_j \in \mathbb{R}^5 \mid i \neq j \in \{1,\dots,5\} \}\] and one possible choice of simple roots consists of \[ \alpha_i \coloneqq e_i -e_{i+1} \text{ for $i=1,\dots,4$} \quad \text{and} \quad \alpha_5 \coloneqq e_5+e_{4}.\]

\begin{example}\label{Ex:Reidegeld_Dic3}
Reidegeld~\cite[Section~5.3.4]{Reidegeld} constructs an example of a flat $\G_2$-orbifold whose singular strata split into four types. All types can be described via \autoref{ex_G2Model} together with the data of rows~$1$, $3$, $4$, and~$6$ in \autoref{fig:Singular1}, respectively. 

Singularities of types described by rows $1$, $3$, and $4$ have been treated in the previous examples. We therefore focus on the strata determined by row $6$. A resolving ALE space can be constructed via Kronheimer's method and requires a choice of parameter $\zeta \in \Delta^\circ$ (cf. \autoref{Sec:KronheimerALE}). All parameters such that the $G_S$-action lifts to the ALE space $X_\zeta$ can be found in \autoref{sec:ReidegeldResolution}.

For example, the choice $\zeta\coloneqq [4i,3i,2i,i,0]$ leads to an ALE space $X_\zeta$ such that the $G_S$-action lift. We now pick $\tilde{\zeta}\coloneqq (4i,3i,2i,i,0) \in \im \mathbb{H} \otimes \mathbb{R}^5$ as a representative of $\zeta$. The corresponding ALE space $X_{\tilde{\zeta}}$ contains five $I_i$-holomorphic spheres $\Sigma_{\alpha_1},\dots,\Sigma_{\alpha_5}$ (which correspond to the simple roots of the $D5$ root system $\Phi$ described above; cf. \autoref{bul: holomorphic curves in Kronheimer's ALE} of \autoref{Sec:KronheimerALE}). Each one of these together with $\xi_2 \coloneqq j$, $\xi_3 \coloneqq k$, and any choice of $q \in \im \mathbb{H}$ satisfies the conditions of \autoref{prop:coassociatives_examples} and gives therefore rise to a coassociative submanifold.

By \autoref{bul: holomorphic curves in Kronheimer's ALE} of \autoref{Sec:KronheimerALE}, the spheres $\Sigma_{\alpha_1},\dots,\Sigma_{\alpha_5}$ are embedded if there exists no decomposition $\alpha_i = \theta_1 + \theta_2$ into roots $\theta_1,\theta_2 \in \Phi$ such that $\vert \tilde{\zeta}(\theta_1)\vert + \vert \tilde{\zeta}(\theta_2)\vert = \vert \tilde{\zeta}(\alpha_i)\vert$. Direct inspection of $\tilde{\zeta}$ applied to any root $\theta = \pm e_i \pm e_j \in \Phi$ reveals that this is indeed the case.

From \autoref{bul: holomorphic curves in Kronheimer's ALE} and \autoref{bul: Kronheimer Weyl group} of \autoref{Sec:KronheimerALE} and the description of the adjoint action of $\rho_S(R_+,\frac{i+k}{2})$ and $\rho_S(R_-,\frac{j}{2})$ on $\mathbb{R}\Phi$ given in \autoref{sec:ReidegeldResolution} follows that the lifts of $\rho_S(R_+,\frac{i+k}{2})$ and $\rho_S(R_-,\frac{j}{2})$ to $X_{\zeta}$ preserve the spheres $\Sigma_{\alpha_1},\dots,\Sigma_{\alpha_3}$ and interchange the disjoint spheres $\Sigma_{\alpha_4}$ and $\Sigma_{\alpha_5}$. We again set $q \coloneqq si$ for $s\in \mathbb{R}$ as the basepoint and denote by $\iota^q_{\alpha_i} \col M_{\alpha_i} \to \hat{Y}_t$ the immersed coassociative submanifold associated to $(\Sigma_{\alpha_i},j,k)$ and $q$. Up to reparametrisation, these satisfy $\iota^q_{\alpha_4} = \iota^{q+\frac{i}{2}}_{\alpha_5}$ and $\iota_{\alpha_4}^q = \iota_{\alpha_4}^{-q+\frac{i}{2}}$ which comes from applying $\rho_S(R_+\frac{i+k}{2})$ and $\rho_S(R_+R_-,\frac{i+k-j}{2})$, respectively. This implies that $(\Sigma_{\alpha_4},j,k)$ and $(\Sigma_{\alpha_5},j,k)$ give rise to \ita{one} deformation family of coassociative submanifolds inside $\hat{Y}_t$, parametrised by $s \in [-\frac{1}{4},\frac{1}{4}]$. As in \autoref{ex:Reidegeld_descripition_qdependence} we obtain that for the inner values these submanifolds are embedded and at the boundary, they factor through double cover. The coassociatives associated to $\Sigma_{\alpha_1},\dots,\Sigma_{\alpha_3}$ behave as in \autoref{ex:Reidegeld_descripition_qdependence}.  
\end{example}

\begin{remark}
Reidegeld~\cite[Section~5.3.4]{Reidegeld} constructs two further examples of orbifolds whose transverse singularities are modelled on $\mathbb{H}/\Gamma_S$ for $\Gamma_S \in \{ C_2,C_4,\textup{Dic}_2\}$. These are treated in~\cite[Examples~4.5 and~4.6]{DPSDTW-Associatives} and we note that \autoref{prop:coassociatives_examples} produces coassociative submanifolds in all critical loci.
\end{remark}

\begin{remark}
In~\cite{JoyceKarigiannis-Kummer} Joyce and Karigiannis extended the generalised Kummer construction to certain non-flat $\G_2$-orbifolds. If similar estimates as in \autoref{theo:Platt} continue to hold, then it seems plausible that the construction method for coassociative submanifolds presented in the current article can be extended to these new manifolds.
\end{remark}

\begin{remark}
Assume for simplicity the following situation: Let $(Y_0,\phi_0)$ be a flat $\G_2$-orbifold whose singularities are all modelled upon $T^3 \times \mathbb{H}/C_2$ where $T^3 = \im \mathbb{H}/\mathbb{Z}^3$ (this is for example the case in~\cite[Example~3]{Joyce-GeneralisedKummer2}). These singularities can be resolved by Gibbons--Hawking spaces $X_\zeta$ for any choice of parameter $\zeta \coloneqq [-x,x] \in (\im \mathbb{H}\setminus \{0\})^2/\{\pm1\}$. However, in order to apply \autoref{prop:coassociatives_examples}, we need the line $\ell \coloneqq  \mathbb{R}x$ to intersect $\mathbb{Z}^3 \subset \im \mathbb{H}$ (this is precisely the second condition of \autoref{ass:coassociative_data}). The following regards the situation where this condition fails: 

Assume that $x \in \im \mathbb{H} \setminus\{0\}$ is such that the line $\mathbb{R}x \subset \im \mathbb{H}$ is 'irrational' (i.e. does not intersect $\mathbb{Z}^3$). Then one could approximate $x$ by a sequence $(x_n)_{n\in \mathbb{N}} \subset \im \mathbb{H}\setminus\{0\}$ such that all corresponding lines $\mathbb{R}x_n$ are rational (i.e. do intersect $\mathbb{Z}^3$). For each resolution by $T^3 \times X_{\zeta_n}$ with $\zeta_n \coloneqq [-x_n,x_n]$ we obtain a $\phi_t$-coassociative submanifold $M_n \subset \hat{Y}_t$ for $t< T_n$ by \autoref{prop:coassociatives_examples}. However, as $n\to \infty$ we have that $T_n \to 0$. Thus (after rescaling) these coassociatives only converge to a (non-compact) coassociative inside the limiting $\mathbb{R}^3 \times X_\zeta$ for $\zeta=[-x,x]$. One might however hope that once $x_n\to x$ converges sufficiently faster than 
$T_n \to 0$,\footnote{see~\cite{DiophantineApproximation} for an overview on the measure-theoretic properties of irrational numbers that are approximable by rationals with a given rate} then \ita{some instance} of this limiting coassociative is already visible inside the resolution by the irrational $T^3 \times X_\zeta$ shortly before the orbifold limit is reached. 

Unfortunately, \autoref{theo:perturbing_Coassociatives} seems to be of little help when addressing this question. This is because the two $\G_2$-structures $\phi_t(\zeta_n)$ and $\phi_t(\zeta)$ on $\hat{Y}_t$ constructed by resolving respectively with a rational $\zeta_n$ and the irrational $\zeta$ lie in different cohomology classes. The $\phi_t(\zeta_n)$-coassociatives constructed in \autoref{prop:coassociatives_examples} can then not be perturbed further to $\phi_t(\zeta)$-coassociatives because the second condition of \autoref{theo:perturbing_Coassociatives} is violated.
\end{remark}

\appendix
\section{The quadratic estimate}

\label{Sec:quadraticestimate}
This section establishes the quadratic estimate for the map $\mathcal{N}_\mathtt{J}$ in \autoref{prop:linearisation}.
\begin{lemma}
Let $v,w \in \Gamma(TM)$ be vector fields and $\eta \in \Omega^\ell(M)$ be an $\ell$-form. Then the following identities hold for any torsion free connection $\nabla$:
\begin{align*}
\pounds_w \eta &= \nabla_w \eta + \langle \nabla w \wedge \eta \rangle \\
\pounds_v \pounds_w \eta &= \nabla_v\nabla_w \eta + \langle \nabla w \wedge \nabla_v \eta \rangle + \langle \nabla_{v,\cdot}^2 w \wedge \eta \rangle \\
&\qquad + \langle \nabla v \wedge \nabla_w \eta \rangle + \langle \nabla v \wedge \langle \nabla w \wedge \eta \rangle\rangle
\end{align*}
where $\langle \  \cdot \ \wedge\ \cdot \ \rangle \col T^*M\otimes TM \otimes \Lambda^kT^*M \to \Lambda^kT^*M$ contracts the second and third $TM\otimes T^*M \cong \underline{\mathbb{R}}$ component and takes the the wedge product afterwards. Furthermore, $\nabla^2_{v,w} = \nabla_v \nabla_w - \nabla_{\nabla_v w}$ denotes the second covariant derivative.
\end{lemma}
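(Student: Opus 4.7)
The plan is to prove the first identity by observing that $\pounds_w - \nabla_w$ is a derivation of the tensor algebra which vanishes on $C^\infty(M)$; it therefore suffices to check the formula on $1$-forms and extend by the Leibniz rule to arbitrary $\eta \in \Omega^\ell(M)$. For a $1$-form $\alpha$ and vector $v$, the torsion-free identity $[w,v] = \nabla_w v - \nabla_v w$ reduces $(\pounds_w\alpha - \nabla_w\alpha)(v)$ to the pairing $\alpha(\nabla_v w)$, which is exactly what $\langle \nabla w \wedge \alpha \rangle$ encodes once one unwinds its definition (for $\ell = 1$ the exterior wedge is trivial, and only the contraction survives). Both sides being derivations of the exterior algebra agreeing on generators, they agree on all $\eta$.

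For the second identity I would simply iterate the first. Applying $\pounds_v$ to $\pounds_w \eta = \nabla_w \eta + \langle \nabla w \wedge \eta \rangle$ and invoking the first identity once more on the result gives
\[ \pounds_v \pounds_w \eta = \nabla_v\bigl(\nabla_w \eta + \langle \nabla w \wedge \eta\rangle\bigr) + \bigl\langle \nabla v \wedge \bigl(\nabla_w \eta + \langle \nabla w \wedge \eta \rangle\bigr)\bigr\rangle. \]
Since $\nabla$ is a tensor derivation it commutes with the contraction underlying $\langle\,\cdot\wedge\cdot\,\rangle$, so $\nabla_v \langle \nabla w \wedge \eta \rangle = \langle \nabla_v(\nabla w) \wedge \eta\rangle + \langle \nabla w \wedge \nabla_v \eta\rangle$. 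Identifying $\nabla_v(\nabla w)$ as a section of $T^*M \otimes TM$ with $\nabla^2_{v,\cdot}w$ via $(\nabla_v(\nabla w))(X) = \nabla_v \nabla_X w - \nabla_{\nabla_v X}w = \nabla^2_{v,X}w$ produces the $\langle \nabla^2_{v,\cdot}w \wedge \eta\rangle$ term, and distributing the outer bracket yields the remaining two terms, matching the stated formula after reordering.

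I expect the main obstacle to be purely notational: keeping careful track of which slot of the bracket $\langle\,\cdot\wedge\cdot\,\rangle$ is being contracted, particularly in the iterated expression $\langle \nabla v \wedge \langle \nabla w \wedge \eta\rangle\rangle$, so that each term in the final formula arises with the correct placement. No curvature enters the computation, because $\nabla^2_{v,\cdot}w$ is deliberately left unsymmetrised in its two vector arguments; attempting to symmetrise would introduce a Riemann tensor term, which is explicitly not part of the statement.
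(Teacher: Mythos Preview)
Your argument is correct. For the first identity your derivation-based reduction to $1$-forms is equivalent to the paper's direct evaluation on $(u_1,\dots,u_k)$: both rest on the torsion-free relation $[w,u_i]=\nabla_w u_i-\nabla_{u_i}w$, and your Leibniz extension simply repackages the sum $\sum_i(-1)^{i+1}\eta(\nabla_{u_i}w,u_1,\dots,\hat u_i,\dots,u_k)$ that the paper writes down explicitly. For the second identity the paper merely says ``proven similarly,'' meaning another direct evaluation on vectors, whereas you iterate the first identity and use that $\nabla_v$ commutes with the contraction in $\langle\cdot\wedge\cdot\rangle$ together with $(\nabla_v(\nabla w))(X)=\nabla^2_{v,X}w$. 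Your route is slightly cleaner because it avoids redoing the combinatorics of the antisymmetrisation and makes transparent why no curvature term appears; the paper's route has the virtue of being uniform with the first step. Both are short and entirely equivalent in content.
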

\begin{proof}
Since $\nabla$ is torsion-free, the equality 
\begin{align*}
(\pounds_w \eta)(u_1,\dots,u_k) &= (\nabla_w \eta)(u_1,\dots,u_k) + \sum_i (-1)^{i+1} \eta(\nabla_{u_i}w,u_1,\dots\hat{u}_i,\dots,u_k)
\end{align*}
holds. This is the first identity and the second is proven similarly.
\end{proof}

Recall from \autoref{sec:Perturbing_coassociatives} that $\iota \col M \to Y$ is a coassociative immersion equipped with a tubular neighbourhood $\mathtt{J}\col U \to Y$. Furthermore, let $F_\mathtt{J}$ and $\mathcal{N}_\mathtt{J}$ be defined as in \autoref{prop:linearisation}.

\begin{lemma}\label{lem:secDerEstimate}
Let $u,v,w\in \Gamma(U)$. The second derivative of $F_\mathtt{J}$ can be estimated by
\begin{align*}
\Vert (D_{u}DF_\mathtt{J})(v)(w) \VertH{k} \leq c (1+ \Vert u \VertH{k+1}) \Vert v \VertH{k+1} \Vert w \VertH{k+1}
\end{align*}
where we regard the differential $DF_\mathtt{J}$ as a map from $\Gamma(U)$ to $\Hom(\Omega^2_+(M),\Omega^3(M))$ and accordingly, $(D_uDF_\mathtt{J})(v) \in \Hom(\Omega^2_+(M),\Omega^3(M))$.
\end{lemma}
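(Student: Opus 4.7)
The plan is to reduce the estimate to standard Moser-type Hölder calculus via an explicit formula for $D^2 F_\mathtt{J}$ obtained from the preceding Lie-derivative lemma. Set $\Omega := \mathtt{J}^*\phi$, a closed $3$-form on the total space $\Lambda^2_+ T^*M$ restricted to $U$, so that $F_\mathtt{J}(u) = u^*\Omega$ when we regard $u$ as a section $M \to \Lambda^2_+ T^*M$ with values in $U$. For any $v \in \Gamma(\Lambda^2_+ T^*M)$ denote by $V_v$ the associated vertical vector field on the total space, defined fiberwise by $V_v(\xi) := v(\pi(\xi))$ under the canonical identification of a fiber with its vertical tangent space; its flow is $\Phi^{V_v}_s(\xi) = \xi + s v(\pi(\xi))$. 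Because $V_v$ and $V_w$ are vertical and constant along fibers they commute, so $\mathtt{J}_{u + sv + tw} = \mathtt{J} \circ \Phi^{V_v}_s \circ \Phi^{V_w}_t \circ u$ and therefore
\[(D_u DF_\mathtt{J})(v)(w) \;=\; u^*\bigl(\mathcal{L}_{V_w} \mathcal{L}_{V_v} \Omega\bigr).\]

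Applying the preceding lemma with any torsion-free connection $\nabla$ on $\Lambda^2_+ T^*M$ rewrites $\mathcal{L}_{V_w}\mathcal{L}_{V_v}\Omega$ as a sum of contractions involving $\Omega$, $V_v$, $V_w$, and their covariant derivatives up to order two. After pullback by $u$, every factor of $V_v$ or $\nabla V_v$ contributes at most the $1$-jet of $v$ on $M$: $V_v$ is constant along fibers so its vertical derivatives are pure connection terms, while its horizontal derivatives evaluate to $\nabla v$. Although the term $\nabla^2_{V_w,\cdot}V_v$ superficially involves $\nabla^2 v$, such contributions cancel against the corresponding pieces of $\nabla_{V_w}\nabla_{V_v}\Omega$ after pullback, in accordance with the fact that $F_\mathtt{J}$ is a first-order operator in $u$. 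Unwinding this in a local split frame of $\Lambda^2_+ T^*M$ yields a pointwise formula
\[\bigl[(D_uDF_\mathtt{J})(v)(w)\bigr](x) \;=\; A\bigl(u(x), \nabla u(x)\bigr)\bigl[\bigl(v(x),\nabla v(x)\bigr),\bigl(w(x),\nabla w(x)\bigr)\bigr],\]
where $A$ is a smooth, bundle-morphism-valued tensor, bilinear in its last two arguments.

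The estimate now follows from routine Hölder calculus. Since $A$ is smooth and $u$ takes values in the relatively compact set $U$, the Moser composition inequality gives $\Vert A(u,\nabla u) \VertH{k} \leq c (1+\Vert u\VertH{k+1})$, with constant depending only on the $C^1$-bound forced by $u\in U$. Combining this with the Hölder product inequality and the trivial bound $\Vert \nabla v\VertH{k}\leq \Vert v\VertH{k+1}$ produces the claim. The main obstacle is the reduction in the second paragraph: verifying that the pullback of the Lie-derivative expression really is first-order in $v,w$. This is expected a priori, since $F_\mathtt{J}$ depends only on the $1$-jet of $u$ and hence each Fréchet derivative in $u$ does as well, but bookkeeping the cancellation of the apparent $\nabla^2 v$ and $\nabla^2 w$ contributions is the source of the technical length alluded to in the proof of \autoref{prop:linearisation}.
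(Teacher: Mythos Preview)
Your approach is essentially the paper's: lift $v,w$ to the vertical fields $\hat v,\hat w$ on $U$, write $(D_uDF_\mathtt{J})(v)(w)=u^*\pounds_{\hat v}\pounds_{\hat w}(\mathtt{J}^*\phi)$, expand with the Lie-derivative lemma, and then count derivatives. The paper's write-up differs only in packaging (term-by-term bounds rather than your Moser composition argument) and in how it justifies that only the $1$-jets of $v,w$ enter.

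One point deserves correction. You say the apparent $\nabla^2 v$ contribution from $\nabla^2_{V_w,\cdot}V_v$ ``cancels against the corresponding pieces of $\nabla_{V_w}\nabla_{V_v}\Omega$''. There is nothing to cancel: $\nabla_{V_w}\nabla_{V_v}\Omega$ involves no derivatives of $v$ or $w$ at all, only their pointwise values. The actual reason the second-order terms are absent is the one you already stated a line earlier---$V_v$ is constant along fibres, and $V_w$ is vertical, so $\nabla^2_{V_w,\cdot}V_v$ pulled back through $u$ produces only connection terms multiplied by the $1$-jets of $v,w$. The paper makes exactly this observation, noting that $(\nabla_{\hat v}\hat w)(u_m)$ depends only on $v(m)$ and $w(m)$, and hence bounds $\langle\nabla^2_{\hat v,\cdot}\hat w\wedge\mathtt{J}^*\phi\rangle$ directly by $\Vert v\VertH{k+1}\Vert w\VertH{k+1}$. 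Replace the cancellation remark with this structural argument and your sketch matches the paper.
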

\begin{proof}
Lift the sections $v,w\in \Gamma_M(U)$ to vector fields $\hat{v},\hat{w} \in \Gamma_U(TU)$ via $\hat{v}(u_m) \coloneqq \frac{\diff}{\diff t} u_m+tv(m)_{\vert t=0}$ where $m\in M$ denotes the basepoint of $u_m$ (and analogously for $\hat{w}$). Denote their respective flows by $\varphi^{\hat{v}}$, and $\varphi^{\hat{w}}$. Then 
\begin{align*}
(D_uDF_\mathtt{J})(v)(w) &= \partial_t\partial_s F_\mathtt{J}(u+tv+sw) = \partial_t \partial_s u^* (\varphi_t^{\hat{v}})^*(\varphi_s^{\hat{w}})^* (\mathtt{J}^*\phi) \\
&= u^*\pounds_{\hat{v}} \pounds_{\hat{w}} (\mathtt{J}^*\phi).
\end{align*}
Thus, $\Vert D_uDF_\mathtt{J}(v)(w) \VertH{k} \leq c_1 \Vert D u \VertH{k} \Vert \pounds_{\hat{w}} \pounds_{\hat{v}} (\mathtt{J}^*\phi) \VertH{k}$. 

The connection on $\Lambda^2_+T^*M$ induces a decomposition of the tangent bundle $TU$ into vertical $V$ and horizontal component $H$. The vertical part of the differential $D u \in \Gamma(\Hom(TM,u^*TU))$ is given (up to the identification of $u^*V$ with $\Lambda^2_+T^*M$) by $\nabla u$ and the horizontal component is up to the identification $u^*H \cong TM$ given by the identity map. Therefore, $\Vert D_uDF_\mathtt{J}(v)(w) \VertH{k} \leq c_2 (1+\Vert \nabla u \VertH{k}) \Vert \pounds_{\hat{w}}\pounds_{\hat{v}} (\mathtt{J}^*\phi) \VertH{k}$.

To estimate the Lie derivative, we invoke the previous lemma. The only two terms that might require an explanation are $\nabla_{\hat{v}} \nabla_{\hat{w}} (\mathtt{J}^*\phi)$ and $\langle \nabla^2_{{\hat{v}},\cdot}{\hat{w}} \wedge (\mathtt{J}^*\phi) \rangle$. The first can be estimated by 
\begin{align*}
\Vert \nabla_{\hat{v}} \nabla_{\hat{w}} (\mathtt{J}^*\phi) \VertH{k} &\leq \Vert i_{\hat{w}}(\nabla_{\hat{v}} \nabla (\mathtt{J}^*\phi)) \VertH{k} + \Vert \nabla_{\nabla_{\hat{v}} {\hat{w}}} (\mathtt{J}^*\phi)) \VertH{k} \\
& \leq c_3 \Vert w \VertH{k} \Vert v \VertH{k} (\Vert \nabla\nabla (\mathtt{J}^*\phi) \VertH{k} + \Vert \nabla(\mathtt{J}^*\phi) \VertH{k}).
\end{align*}
Note that in the second line there is no additional derivative of $\hat{w}$ coming from $\nabla_{\hat{v}}\hat{w}$. This is because $(\nabla_{\hat{v}}\hat{w})(u_m)$ only depends on $v(m)$ and $w(m)$. (In fact, one can define a map $\Phi \col U\times_MU \to TU$ by $(u_1,u_2)\mapsto \nabla_{\hat{u}_1}\hat{u}_2$.)

\noindent Similarly,
\begin{align*}
\Vert \langle \nabla^2_{v,\cdot} w \wedge (\mathtt{J}^*\phi) \rangle \VertH{k} & \leq c_4 (\Vert v \VertH{k} \Vert w \VertH{k} + \Vert v \VertH{k+1} \Vert w \VertH{k+1}) \Vert \mathtt{J}^*\phi \VertH{k}
\end{align*}
which together with with the observation that $\Vert \mathtt{J}^*\phi \VertH{k+2}$ is bounded finishes the proof.
\end{proof}

\begin{proposition}
The quadratic estimate \begin{align*}
\Vert \mathcal{N}_\mathtt{J}(v) - \mathcal{N}_\mathtt{J}(w) \VertH{k} & \leq c (1+\Vert v \VertH{k+1}+\Vert w \VertH{k+1}+\Vert v-w \VertH{k+1}) \\
& \qquad \Vert v-w\VertH{k+1} (\Vert v \VertH{k+1} + \Vert w \VertH{k+1}) 
\end{align*} holds.
\end{proposition}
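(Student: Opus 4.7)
The plan is to reduce this quadratic estimate to the second-derivative bound of \autoref{lem:secDerEstimate} via two applications of the Fundamental Theorem of Calculus. Since $\mathcal{N}_\mathtt{J}(\omega) = F_\mathtt{J}(\omega) - D_0 F_\mathtt{J}(\omega)$ and $D_0 F_\mathtt{J}$ is linear, I would first write
\[ \mathcal{N}_\mathtt{J}(v) - \mathcal{N}_\mathtt{J}(w) = \bigl( F_\mathtt{J}(v) - F_\mathtt{J}(w) \bigr) - D_0 F_\mathtt{J}(v-w). \]
Setting $\omega_s \coloneqq (1-s)w + s v$, the FTC along the segment $s \mapsto \omega_s$ gives $F_\mathtt{J}(v) - F_\mathtt{J}(w) = \int_0^1 (D_{\omega_s} F_\mathtt{J})(v-w)\,ds$, and a second FTC along the radial path $r \mapsto r \omega_s$ applied to the integrand yields the Taylor-remainder formula
\[ \mathcal{N}_\mathtt{J}(v) - \mathcal{N}_\mathtt{J}(w) = \int_0^1 \!\! \int_0^1 (D_{r\omega_s} DF_\mathtt{J})(\omega_s)(v-w) \, dr \, ds. \]

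Having identified the integrand with the mixed second derivative appearing in \autoref{lem:secDerEstimate}, I would then plug in the pointwise estimate from that lemma:
\[ \Vert (D_{r\omega_s} DF_\mathtt{J})(\omega_s)(v-w) \VertH{k} \leq c \bigl(1 + \Vert r\omega_s \VertH{k+1} \bigr) \Vert \omega_s \VertH{k+1} \Vert v-w \VertH{k+1}. \]
Since $r \in [0,1]$ and the convex combination $\omega_s$ satisfies $\Vert \omega_s \VertH{k+1} \leq \Vert v \VertH{k+1} + \Vert w \VertH{k+1}$, Minkowski's inequality applied to the iterated integral over the compact square $[0,1]^2$ bounds $\Vert \mathcal{N}_\mathtt{J}(v) - \mathcal{N}_\mathtt{J}(w) \VertH{k}$ by
\[ c \bigl( 1 + \Vert v \VertH{k+1} + \Vert w \VertH{k+1} \bigr) \bigl( \Vert v \VertH{k+1} + \Vert w \VertH{k+1} \bigr) \Vert v-w \VertH{k+1}, \]
which is in fact slightly stronger than the stated inequality (the $\Vert v-w\VertH{k+1}$ summand inside the first factor is not needed) and in particular implies it after absorbing constants.

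Honestly, no real obstacle arises at this stage: all the analytic work, namely expressing $(D_u DF_\mathtt{J})(v)(w)$ as a double Lie derivative $u^*\pounds_{\hat{v}}\pounds_{\hat{w}}(\mathtt{J}^*\phi)$ and controlling the resulting covariant expressions in Hölder norm, was absorbed into \autoref{lem:secDerEstimate}. The present proposition is then a mechanical Taylor-remainder computation combined with Minkowski, and the only bookkeeping to watch is to ensure the two FTC steps produce the symmetric mixed derivative exactly in the form that matches the hypotheses of \autoref{lem:secDerEstimate}.
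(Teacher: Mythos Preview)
Your proof is correct and follows essentially the same approach as the paper: both reduce the estimate to \autoref{lem:secDerEstimate} via two applications of the Fundamental Theorem of Calculus. The paper uses a slightly different splitting---writing $\mathcal{N}_\mathtt{J}(v)-\mathcal{N}_\mathtt{J}(w)$ as $\int_0^1\int_0^t\bigl[(D_{sv}DF_\mathtt{J})(v)(v-w)+(D_{tw+s(v-w)}DF_\mathtt{J})(v-w)(w)\bigr]\diff s\diff t$---but your single-term integral representation along the segment $\omega_s$ and then the radial ray is equally valid and arguably cleaner; it also explains why the extra $\Vert v-w\VertH{k+1}$ summand in the first factor of the paper's stated bound is not actually needed.
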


\begin{proof}
This follows immediately from the previous lemma and
\begin{align*}
\mathcal{N}_\mathtt{J}(v) - \mathcal{N}_\mathtt{J}(w) &= \int_0^1 D_{tv}F_\mathtt{J}(v-w) - D_0F_\mathtt{J}(v-w) + (D_{tv}F_\mathtt{J} - D_{tw}F_\mathtt{J})(w) \diff t \\
&= \int_0^1 \int_0^t (D_{sv}DF_\mathtt{J})(v)(v-w) + (D_{tw+s(v-w)}DF_\mathtt{J})(v-w)(w) \diff s \diff t.
\end{align*}
\end{proof}

\section{Resolution data for Reidegeld's orbifolds}

\label{sec:ReidegeldResolution}

In this section we describe how to construct resolution data for the $\G_2$-orbifolds of~\cite[Section~5.3.4]{Reidegeld} that were used in \autoref{Sec:Examples}. A neighbourhood of the singular strata in all these orbifolds can be described by \autoref{ex_G2Model} using the data from \autoref{fig:Singular1} (cf. \autoref{Sec:Examples}).

The resolution data for singular strata which are described in rows~$1.$-$5.$ of \autoref{fig:Singular1} can be constructed via the Gibbons--Hawking Ansatz (cf. \autoref{sub:GibbonsHawking}) or, equivalently, via Kronheimer's construction (cf. \autoref{Sec:KronheimerALE}). Recall that in order to obtain a smooth manifold we need to choose for either method a parameter $\zeta$ from \[ \Delta^\circ \coloneqq \{ [\zeta_1,\dots,\zeta_N] \in (\im \mathbb{H})^N/S_N \mid \zeta_1+\dots+\zeta_N= 0 \textup{ and $\zeta_i\neq \zeta_j$ for $i \neq j$} \}. \] To lift the action of $G_S$ we need to restrict further to the following sets:
\begin{enumerate}
\item \( 
\begin{aligned}[t](\Delta^\circ)^{R_+,(-R_-)} = &\{ [\zeta_1,R_+\zeta_1] \in \Delta^\circ \mid  \zeta_1 \in \big((\mathbb{R}j)^\perp \cup (\mathbb{R}k)^\perp\big)\setminus \mathbb{R}i  \} \cup \\
  &\{ [-\zeta_1,\zeta_1] \in \Delta^\circ \mid \zeta_1 \in \mathbb{R}i \}
\end{aligned} 
\)
\item \( 
\begin{aligned}[t]
(\Delta^\circ)^{(-R_-)} \hspace{0.52cm} = &\{ [\zeta_1,\zeta_2,-R_- \zeta_2] \in \Delta^\circ \mid \zeta_1 \in (\mathbb{R}j)^\perp, \zeta_2 \notin (\mathbb{R}j)^\perp\} \cup \\
&\{ [\zeta_1,\zeta_2,\zeta_3] \in \Delta^\circ \mid \zeta_1,\zeta_2,\zeta_3 \in (\mathbb{R}j)^\perp\} 
\end{aligned} 
\)
\item \( 
\begin{aligned}[t]
(\Delta^\circ)^{R_+,(-R_-)} = &\{ [\zeta_1,\zeta_2,R_+\zeta_2] \in \Delta^\circ \mid \zeta_1 \in \mathbb{R}i, \zeta_2 \in \big((\mathbb{R}j)^\perp \cup (\mathbb{R}k)^\perp\big)\setminus \mathbb{R}i  \} \cup \\
&\{ [\zeta_1,\zeta_2,\zeta_3] \in \Delta^\circ \mid \zeta_1,\zeta_2,\zeta_3 \in \mathbb{R}i \}
\end{aligned} 
\)
\item \( 
\begin{aligned}[t]
(\Delta^\circ)^{R_+,(-R_-)} = &\{ [\zeta_1,R_+\zeta_1,-R_-\zeta_1,-R_+R_-\zeta_1] \in \Delta^\circ \mid \zeta_1 \notin (\mathbb{R}j)^\perp \cup (\mathbb{R}k)^\perp \} \cup \\
&\{ [\zeta_1,R_+\zeta_1,\zeta_2,R_+\zeta_2] \in \Delta^\circ \mid \zeta_1,\zeta_2 \in \big((\mathbb{R}j)^\perp \cup (\mathbb{R}k)^\perp\big)\setminus \mathbb{R}i\} \cup  \\
&\{ [\zeta_1,\zeta_2,\zeta_3,R_+\zeta_3] \in \Delta^\circ \mid \zeta_1,\zeta_2 \in \mathbb{R}i, \zeta_3 \in \big((\mathbb{R}j)^\perp \cup (\mathbb{R}k)^\perp\big)\setminus \mathbb{R}i\} \cup  \\
&\{ [\zeta_1,\zeta_2,\zeta_3,\zeta_4] \in \Delta^\circ \mid \zeta_1,\zeta_2,\zeta_3,\zeta_4 \in \mathbb{R}i \} 
\end{aligned} 
\)
\item \( 
\begin{aligned}[t]
(\Delta^\circ)^{R_+,(-R_-)} = &\{ [\zeta_1,R_+\zeta_1,\zeta_2,R_+\zeta_2,-R_-\zeta_2,-R_+R_-\zeta_2] \in \Delta^\circ \mid \\
 & \qquad \qquad \zeta_1 \in \big((\mathbb{R}j)^\perp \cup (\mathbb{R}k)^\perp\big)\setminus \mathbb{R}i, \zeta_2 \notin (\mathbb{R}j)^\perp \cup (\mathbb{R}k)^\perp \} \cup \\
&\{ [\zeta_1,\zeta_2,\zeta_3,R_+\zeta_3,-R_-\zeta_3,-R_+R_-\zeta_3] \in \Delta^\circ \mid \\
&\qquad \qquad \zeta_1,\zeta_2 \in \mathbb{R}i, \zeta_2 \notin (\mathbb{R}j)^\perp \cup (\mathbb{R}k)^\perp \} \cup \\
&\{ [\zeta_1,R_+\zeta_1,\zeta_2,R_+\zeta_2,\zeta_3,R_+\zeta_3] \in \Delta^\circ \mid \\
&\qquad \qquad \zeta_1,\zeta_2,\zeta_3 \in \big((\mathbb{R}j)^\perp \cup (\mathbb{R}k)^\perp\big)\setminus \mathbb{R}i\} \cup \\
&\{ [\zeta_1,\zeta_2,\zeta_3,R_+\zeta_3,\zeta_4,R_+\zeta_4] \in \Delta^\circ \mid \\
&\qquad \qquad \zeta_1,\zeta_2 \in \mathbb{R}i,\zeta_3,\zeta_4 \in \big((\mathbb{R}j)^\perp \cup (\mathbb{R}k)^\perp\big)\setminus \mathbb{R}i\} \cup \\
&\{ [\zeta_1,\zeta_2,\zeta_3,\zeta_4,\zeta_5,R_+\zeta_5] \in \Delta^\circ \mid \\
&\qquad \qquad \zeta_1,\zeta_2,\zeta_3,\zeta_4 \in \mathbb{R}i,\zeta_5\in \big((\mathbb{R}j)^\perp \cup (\mathbb{R}k)^\perp\big)\setminus \mathbb{R}i\} \cup \\
&\{ [\zeta_1,\zeta_2,\zeta_3,\zeta_4,\zeta_5,\zeta_6] \in \Delta^\circ \mid \zeta_1,\zeta_2,\zeta_3,\zeta_4,\zeta_5,\zeta_6 \in \mathbb{R}i\}.
\end{aligned} 
\)
\end{enumerate}

To resolve singular strata described in row~$6.$ of \autoref{fig:Singular1} we need Kronheimer's construction as reviewed in \autoref{Sec:KronheimerALE}.

Recall from \autoref{rem:ADE} that the group $\textup{Dic}_3$ corresponds to the root system $D_5$. This root system is given by (cf.~\cite[Chapter~VI.4.8]{Bourbaki-Lie4-6}) \[ \Phi = \{\pm e_i \pm e_j \in \mathbb{R}^5 \mid i \neq j \in \{1,\dots,5\} \}\] and one possible choice of simple roots consists of \[ \alpha_i \coloneqq e_i -e_{i+1} \text{ for $i=1,\dots,4$} \quad \text{and} \quad \alpha_5 \coloneqq e_5+e_{4}.\]

The hyperplane perpendicular to $\theta \coloneqq \pm e_i \pm e_j$ for $i \neq j$ is \[ D_\theta = \{ x \in \mathbb{R}^5 \mid x_i = \pm x_j \} \] (with $+$ if $\theta_i + \theta_j =0$ and $-$ otherwise). Furthermore, the Weyl group $W = C_2^{4}\rtimes S_5$ acts on $\mathbb{R}^5$ by permuting and changing the signs of an even number of coordinates. Thus, in order to obtain a smooth ALE hyperkähler manifold asymptotic to $\mathbb{H}/\textup{Dic}_3$ via Kronheimer's construction, we must choose the value of the moment map from \[ \Delta^\circ = \{ [\zeta_1,\dots,\zeta_5] \in ((\im \mathbb{H})^* \otimes \mathbb{R}^5)/W \mid \zeta_i \neq \pm \zeta_j\text{ for $i\neq j$}\}. \] 

In order to lift the action of $G_S$, we need to restrict further to a value which is invariant under $(\Lambda^2_+\rho_S(g) \otimes \Ad_{C_{\rho_S(g)}})^*$ for any $g\in G_S$. Since conjugation by $R_1 \coloneqq \rho_S(R_{-},\frac{j}{2}) \in N_{\SO(\mathbb{H}}(\textup{Dic}_3)$ preserves all conjugacy classes of $\textup{Dic}_3$, we obtain $\Ad_{C_{R_1}}^*=1$ (cf. \autoref{bul:description-Ad_C_R-in-Kronheimer} in \autoref{Sec:KronheimerALE}). Similarly, conjugation by $R_2 \coloneqq \rho_S(R_+,\tfrac{i+k}{2})\in N_{\SO(\mathbb{H})}(\textup{Dic}_3)$ interchanges precisely two conjugacy classes of $\textup{Dic}_3$ and therefore $\Ad^*_{C_{R_2}}= \sigma_5$, where $\sigma_5 \col \mathbb{R}^5 \to \mathbb{R}^5$ is the reflection $(x_1,\dots,x_5) \mapsto (x_1,\dots,x_4,-x_5)$. In order to lift the action of $G_S$, we therefore need to choose a parameter from the following set: 
\begin{enumerate}
\item[6.]\( 
\begin{aligned}[t]
(\Delta^\circ)^{(R_+\sigma_5),R_-} = &\{[\zeta_1,R_+\zeta_1,R_-\zeta_1,R_+R_-\zeta_1, \zeta_2] \in \Delta^\circ
 \mid \zeta_1 \notin (\mathbb{R}i)^\perp \cup (\mathbb{R}j)^\perp \cup (\mathbb{R}k)^\perp  \\
& \qquad \qquad \textup{ and } \zeta_2 \in \mathbb{R}j \}\cup \\
&\{[\zeta_1,R_a\zeta_1,\zeta_2,R_b\zeta_2,\zeta_3] \in \Delta^\circ
 \mid \\
 & \qquad \qquad\zeta_1,\zeta_2 \in ((\mathbb{R}i)^\perp \cup (\mathbb{R}j)^\perp \cup (\mathbb{R}k)^\perp) \setminus (\mathbb{R}i \cup \mathbb{R}j \cup \mathbb{R}k), \\
 & \qquad\qquad R_a,R_b \in \{R_+,R_-,R_+R_-\}, \textup{ and } \zeta_3 \in \mathbb{R}j \} \cup \\
&\{[\zeta_1,R_a\zeta_1,\zeta_2,t\zeta_2,\zeta_3] \in \Delta^\circ
 \mid \\
&\qquad\qquad\zeta_1 \in ((\mathbb{R}i)^\perp \cup (\mathbb{R}j)^\perp \cup (\mathbb{R}k)^\perp) \setminus (\mathbb{R}i\cup \mathbb{R}j \cup \mathbb{R}k), \\
 &\qquad\qquad R_a \in \{R_+,R_-,R_+R_-\}, \zeta_2 \in \mathbb{R}i \cup \mathbb{R}j \cup \mathbb{R}k, t\in \mathbb{R}\setminus\{-1\}, \\ 
 & \qquad\qquad  \textup{ and } \zeta_3 \in \mathbb{R}j \} \cup \\
&\{[\zeta_1,R_a\zeta_1,\zeta_2,\zeta_3,0] \in \Delta^\circ
 \mid \\
&\qquad\qquad\zeta_1 \in ((\mathbb{R}i)^\perp \cup (\mathbb{R}j)^\perp \cup (\mathbb{R}k)^\perp) \setminus (\mathbb{R}i\cup \mathbb{R}j \cup \mathbb{R}k), \\
 &\qquad\qquad R_a \in \{R_+,R_-,R_+R_-\}, \zeta_2 \in \mathbb{R}i, \textup{ and } \zeta_3 \in \mathbb{R}k \} \cup \\
&\{[\zeta_1,t_1\zeta_1,\zeta_2,t_2\zeta_2,\zeta_3] \in \Delta^\circ
 \mid \zeta_1,\zeta_2 \in \mathbb{R}i \cup \mathbb{R}j \cup \mathbb{R}k, \\
 &\qquad \qquad t_1,t_2 \in \mathbb{R} \setminus \{-1\}, \textup{ and } \zeta_3 \in \mathbb{R}j \} \cup \\
&\{[\zeta_1,t\zeta_1,\zeta_2,\zeta_3,0] \in \Delta^\circ
 \mid \zeta_1 \in \mathbb{R}i \cup \mathbb{R}j \cup \mathbb{R}k, t \in \mathbb{R} \setminus \{-1\}, \zeta_2 \in \mathbb{R}i, \\
& \qquad \qquad \textup{ and } \zeta_3 \in \mathbb{R}k \}.
\end{aligned} 
\)
\end{enumerate}

\bibliography{references}{}
\bibliographystyle{alpha}

\end{document}